\newtheorem{theorem}{Theorem}[section]
\newtheorem{proposition}{Proposition}[section]
\newtheorem{lemma}{Lemma}[section]
\newtheorem{corollary}{Corollary}[section]
\newtheorem{assumption}{Assumption}[section]
\theoremstyle{remark}
\newtheorem*{remark}{Remark}
\DeclareMathOperator{\E}{\mathbb{E}}
\let\P\relax
\DeclareMathOperator{\P}{\mathbb{P}}
\DeclareMathOperator{\Var}{Var}
\DeclareMathOperator{\Cov}{Cov}
\let\Re\relax
\DeclareMathOperator{\Re}{\mathrm{Re}}
\DeclareMathOperator{\sinc}{sinc}
\newcommand{\1}{\mathbbm{1}}
\newcommand{\N}{\mathbf{N}}
\newcommand{\M}{\mathbf{M}}
\newcommand{\Z}{\mathbf{Z}}
\DeclarePairedDelimiter{\parentheses}{(}{)}
\newcommand{\bigO}{\ensuremath{\mathop{}\mathopen{}\mathcal{O}\mathopen{}\parentheses}}
\newcommand{\smallO}{\ensuremath{\mathop{}\mathopen{}o\mathopen{}\parentheses}}
\newcommand{\dd}{\mathrm{d}}
\newcommand{\Nn}{\mathbb{N}}
\newcommand{\Rr}{\mathbb{R}} \newcommand{\R}{\mathbb{R}}
\author[1]{Ousmane Boly}
\author[2,3]{Felix Cheysson}
\author[1]{Thi Hien Nguyen}
\affil[1]{Laboratoire AGM, UMR CNRS 8088, CY Cergy Paris Université, 95000 Cergy, France. Email:\{ousmane.boly, thi-hien.nguyen5\}@cyu.fr}
\affil[2]{LPSM, CNRS UMR 8001, Sorbonne Université, Université Paris Cité, Paris, France.}
\affil[3]{LAMA, CNRS UMR 8050, Université Gustave Eiffel, 77420 Champs-sur-Marne, France. Email: felix.cheysson@univ-eiffel.fr}
\title{Mixing properties for multivariate Hawkes processes}   
\begin{document}
\pagestyle{plain}
\maketitle

\begin{abstract}
	Properties of strong mixing have been established for the stationary linear Hawkes process in the univariate case, and can serve as a basis for statistical applications.
	In this paper, we provide the technical arguments needed to extend the proof to the multivariate case.
	We illustrate these properties by establishing a functional central limit theorem for multivariate Hawkes processes.	
\end{abstract}
\textbf{Keywords:} multivariate Hawkes process, strong mixing ($\alpha$-mixing), functional central limit theorem.
\section{Introduction}

Linear Hawkes processes form a versatile family of models for point processes that exhibit self-excitation properties: in addition to a constant rate of arrivals, each new arrival increases the rate at which future arrivals occur.
This additive dependence structure can also be decomposed such that the process can be viewed as a branching process with immigration, thus admitting a Poisson cluster structure where each cluster is itself a branching process without immigration \citep{Hawkes1974}.
While they were initially introduced by \cite{Hawkes1971} in order to model seismic aftershocks \citep{Ogata1988}, Hawkes processes have become increasingly popular due to their wide applicability to various other fields including but not limited to finance \citep{Embrechts2011}, ecology \citep{gupta2017}, neurophysiology \citep{Chornoboy1988}, and social network analysis \citep{Zannettou2018}.
The study of Hawkes processes in itself is also of interest, as theoretical advances in the past decades have paved the way to the development for statistical inference methods \citep{Hansen2015, Bacry2020} and thus to the popularity of the model.


%
%

In particular, works interested in the ergodic and mixing properties of the Hawkes process play an important role, for they can provide strong moment inequalities and coupling methods useful to prove asymptotic properties.
It is well known that the linear Hawkes process is ergodic due to its Poisson cluster structure \citep{Westcott1971}.
When the reproduction function has compact support, \cite{Reynaud-Bouret2006} used this structure to derive exponential inequalities which specify the rate of convergence in the ergodic theorem.
Recently, \cite{Graham2021} extended these results by proving regenerative properties for the linear Hawkes process, even when the reproduction function has unbounded support.
Issues arise for the nonlinear Hawkes process --- in which the rate at which future arrivals occur is non-linearly dependent on the past arrivals --- since the Poisson cluster structure of the process no longer exists.
While the existence and stationarity of the process can be proven under general sets of assumptions \citep{Bremaud1996}, studies of the mixing properties of the process have only been established under specific conditions.
In \cite{Costa2020}, by assuming a bounded support for the reproduction function (which may take negative values), the authors obtained exponential concentration inequalities using renewal techniques, further extending the results from \cite{Reynaud-Bouret2006}.
Without the bounded support assumption, but supposing the reproduction function is exponential, \cite{Dion2021} showed that the non-linear Hawkes process is exponentially $\beta$-mixing, relying on the general theory of Markov processes.


In this work, we focus on establishing rates of convergence for the $\alpha$-mixing of the linear Hawkes process, using the Poisson cluster decomposition of the process.
Initially used to prove asymptotic properties for time series and random fields (see e.g. \citealp{Rosenblatt1956}), the $\alpha$-mixing property can be straightforwardly extended to the field of point processes \citep{Westcott1972, Poinas2019}.
In the univariate context, \cite{Cheysson2022} already derived a polynomial rate of $\alpha$-mixing under tail assumptions for the reproduction function of the Hawkes process.
However, their proof cannot be directly extended to the multivariate case, as a critical argument of independence within each Poisson cluster, that is between the arrivals' times and the number of arrivals in any given generation of the branching representation of the cluster, does not hold when multiple components can mutually excit each other.
This is solved by establishing an exponential inequality in the multitype branching process, which allows us to bound the covariance measure of the branching process.

In turn, the $\alpha$-mixing of the process can serve as a basis for statistical applications, for example by using central limit theorems (CLTs) that already exist in the literature \citep{Doukhan1994}.
To provide useful tools for statistical applications related to the multivariate Hawkes process, we enunciate both a classical and a functional CLTs in this context, and show that the parameters appearing in these CLTs can be easily calculated in practice, using the spectral representation of the process.
Indeed, while the covariance structure of the Hawkes process is often non-explicit, except for exponential reproduction functions \citep{DaFonseca2014, DaFonseca2015}, its Bartlett spectrum (i.e. the Fourier transform of the autocovariance of the process) has an explicit and simple form \citep{Daley2003}.
Specifically, this yields a simple estimator of the long run variance of the process which appears naturally in the CLTs.


Section \ref{sec:notation} introduces the notation we use throughout and the Hawkes model.
In Section \ref{sec:main}, we establish the main results of our contribution, namely an $\alpha$-mixing property and a functional CLT for the multivariate Hawkes process.
Proofs are postponed to subsequent sections, respectively in Section \ref{sec:proof} and \ref{sec:proof_fclt}.


\section{Main definitions}\label{sec:notation}

\subsection{Notation}

Throughout this paper, we consider a probability space $(\Omega, \mathcal{F},\mathbb{P})$ and the measurable space $(\mathfrak N, \mathcal N)$ of locally finite counting measures on $\mathbb{R}$ with values in $\Nn^d$, with $d \ge 1$.
We denote by $\mathcal{B}(A)$ the Borel $\sigma$-algebra generated over a subset $A \subset \mathbb R$, and for a multivariate point process $\N = (N_1, \ldots, N_d)$ \textit{i.e.} a measurable map from $(\Omega, \mathcal{F},\mathbb{P})$ to $(\mathfrak N, \mathcal N)$, by $\mathcal{E}(A)$ the cylindrical $\sigma$-algebra generated by $\N$ on A and defined by
\[\mathcal{E}(A):= \sigma (\{ \N \in \mathfrak N: \N(B)=\mathbf m\}, B\in\mathcal{B}(A), \mathbf m \in \Nn^d). \]
Finally, denote by $\mathbf N_{|A}$ the point process restricted to $A$, that is the process defined by $\mathbf N_{|A}(B) = \mathbf N(A \cap B)$ for all $B \in \mathcal B(\mathbb R)$.

We also introduce some notation for function and vector norms. For a function $f: \mathbb R \to \mathbb R$, denote its $L^1$-norm by $\lVert f \rVert_{L^1}$,
\begin{equation*}
	\lVert f \rVert_{L^1} = \int_{\mathbb R} \lvert f(x) \rvert \dd x.
\end{equation*}
For a vector $\mathbf u = (u_1, \ldots, u_n) \in \mathbb R^n$, denote its $\ell^1$-norm by $\lVert \mathbf u \rVert_1$,
\begin{equation*}
	\lVert \mathbf u \rVert_1 = \sum_{k=1}^n \lvert u_k \rvert,
\end{equation*}
and its supremum norm by $\lVert \mathbf u \rVert_\infty$,
\begin{equation*}
	\lVert \mathbf u \rVert_\infty = \sup_{1 \le k \le n} \lvert u_k \rvert.
\end{equation*}

\subsection{The multivariate linear Hawkes process}

The multivariate Hawkes process $\N = (N_1, \ldots, N_d)$ can be defined as a point process such that
\begin{itemize}
	\item For all $1 \le j \le d$, the conditional intensity $\lambda_j(t)$ of the component $N_j$ --- satisfying 
	\[\P\left(N_j\big((t, t+h)\big) = 1 \mid \mathcal{F}_{t^-}\right) = \lambda_j(t) h + \smallO{h},\]
	with $\mathcal{F}_t$ the natural filtration of the process $\N$ --- is a left-continuous stochastic process given by 
	\begin{align*}
	\lambda_j(t)&=\eta_j + \sum_{i=1}^{d}\int_{-\infty}^{t}h_{ij}(t-u)N_i(\mathrm du) \\
	&= \eta_j + \sum_{i=1}^d \sum_{\{n: T_i^n <t\}} h_{ij}(t-T_i^n),
	\end{align*}
	where $\eta_j > 0$ is the baseline intensity of $N_j$, $h_{ij}: \R^+\to\R^+$ the reproduction function from component $i$ to $j$, and $\{ T_i^n \}_n$ are the atoms of $N_i$.
	\item The point process is orderly: for all $1 \le i\neq j \le d$, $N_i$ and $N_j$ never jump simultaneously,
	\[\P\left(\lVert \mathbf N\big((t, t+h)\big) \rVert_1  \geq 2 \mid \mathcal{F}_{t^-}\right) = \smallO{h}. \]
\end{itemize}

The multivariate Hawkes process $\N$ can also be seen as a Poisson cluster process (see \citealp{Hawkes1974}), where clusters are mutually independent branching processes generated recursively as follows:
\begin{itemize}
	\item Cluster centres, also called immigrants of the process, form the generation 0 of the branching processes. Immigrants of type $j$ arrive according to a homogeneous Poisson process with rate $\eta_j$. 
	\item Then, an individual $T_{ki}^n$ of generation $k$ and type $i$ generates offsprings of generation $k+1$ and type $j$ according to an inhomogeneous Poisson process with intensity $h_{ij}(\cdot - T_{ki}^n)$.
\end{itemize}
For a single branching process, denoting by $Z_{ki}$ the number of individuals of generation $k$ and type $i$, we have that $(Z_{ki})_{k \in \mathbb N, 1 \le i \le d}$ is a multivariate Galton--Watson process. We will make the following assumption, which ensures the existence and stationarity of the multivariate Hawkes process \citep{Bremaud1996}.
\begin{assumption}
	\label{ass:spectral_radius}
	The spectral radius of the reproduction matrix, defined by $\mathbf M \coloneqq (\lVert h_{ij} \rVert_{L^1})_{1 \le i,j \le d}$, is strictly less than $1$.
\end{assumption}

\section{Main results}\label{sec:main}
\subsection{Strong mixing of the multivariate Hawkes process}
For a probability space $(\Omega, \mathcal{F}, \P)$ and $\mathcal{A}, \mathcal{B}$ two sub $\sigma$- algebras of $\mathcal{F}$, the strong mixing coefficient is defined as the measure of dependence between $\mathcal A$ and $\mathcal B$ \citep{Rosenblatt1956}:
\[\alpha(\mathcal{A},\mathcal{B}):= \sup \{\left| \P(A\cap B)-\P(A)\P(B)\right|, A\in\mathcal{A}, B\in\mathcal{B}. \} \]
The definition of the strong mixing coefficient can be adapted to a multivariate point process $\N$, by defining (see \citealp{Poinas2019})
\begin{equation*}
\alpha_{\N}(\tau) = \alpha(\mathcal{E}_{-\infty}^t, \mathcal{E}_{t+\tau}^{+\infty}) 
= \underset{t \in \mathbb{R}}{\mathrm{sup}} 
~\underset{\begin{subarray}{c}
	\mathcal{A} \in \mathcal{E}_{-\infty}^t \\
	\mathcal{B} \in \mathcal{E}_{t+\tau}^\infty
	\end{subarray}}
{\mathrm{sup}}  ~\big| \mathrm{Cov}\big(\mathbbm{1}_\mathcal{A}(\N), \mathbbm{1}_\mathcal{B}(\N)\big) \big|
\end{equation*}
where $\mathcal{E}_a^b$ stands for $\mathcal{E}((a,b])$, i.e the $\sigma$-algebra generated by the cylinder sets on the interval $(a,b]$, and $\1 _\mathcal{A}(\N)$ is the indicator function of the cylinder set $\mathcal{A}$, i.e for any elementary cylinder set $\mathcal{A}_{B,\mathbf n} = \{\N \in\mathfrak N : \N(B) =\mathbf n\}$, $\1 _\mathcal{A}(\N) = 1$ if $\N(B) = \mathbf n$ and $0$ otherwise.
As in the univariate case, the process is said to be strongly mixing if $\alpha_\N(\tau) \to 0$ as $\tau \to \infty$.

In the context of multivariate Hawkes processes, the main result of this contribution is the following theorem.
Theorem \ref{thm:strong_mixing} can serve as a basis for various statistical applications, as examplified by the next subsection.
\begin{theorem}\label{thm:strong_mixing}
	Let $\N$ be a multivariate Hawkes process with reproduction functions $(h_{ij})_{1 \le i,j \le d}$ such that Assumption \ref{ass:spectral_radius} holds true. 
	Further assume that there exists $\beta > 0$ such that the reproduction kernels have finite moment of order $1 + \beta$:
	\begin{equation*}
	\sup_{1 \le i,j \le d} \int_{\mathbb R} t^{1 + \beta} h_{ij}(t) \mathrm dt < \infty.
	\end{equation*}
	Then, for any $0 < \gamma < \beta$, as $\tau \to \infty$,
	\begin{equation}
	\alpha_{\N} (\tau) = \mathcal O\left(\tau^{-\gamma}\right).
	\end{equation}
\end{theorem}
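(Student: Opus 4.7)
The plan is to exploit the Poisson cluster decomposition of the linear Hawkes process. Conditional on the homogeneous Poisson immigrant process, $\N$ is a superposition of mutually independent branching clusters, one per immigrant, so events $\mathcal{A} \in \mathcal{E}_{-\infty}^t$ and $\mathcal{B} \in \mathcal{E}_{t+\tau}^{+\infty}$ become independent once no cluster has descendants in both $(-\infty, t]$ and $(t+\tau, +\infty)$. A standard coupling-type argument then bounds $\lvert \mathrm{Cov}(\1_\mathcal{A}(\N), \1_\mathcal{B}(\N)) \rvert$ by (twice) the probability of the existence of such a ``bridging'' cluster, and Campbell's formula applied to the immigrant Poisson process gives
\[
\alpha_\N(\tau) \;\leq\; 2 \sum_{j=1}^d \eta_j \int_0^\infty \P\bigl(W^{(j)} > u + \tau\bigr)\,\mathrm d u,
\]
where $W^{(j)}$ denotes the temporal extent of a single cluster initiated by a type-$j$ immigrant. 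The problem thus reduces to establishing a polynomial tail bound on $W^{(j)}$ of order $\tau^{-(1+\gamma)}$ which, after integration, yields the claimed $\alpha_\N(\tau) = \bigO{\tau^{-\gamma}}$.

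To control the tail of $W^{(j)}$, I would union-bound over descendants across all generations of the multitype Galton--Watson tree. Writing $\mathcal{D}_k$ for the (random) set of generation-$k$ descendants of the immigrant and $T_d$ for the birth time of $d \in \mathcal{D}_k$ relative to that immigrant,
\[
\P\bigl(W^{(j)} > \tau\bigr) \;\leq\; \sum_{k \geq 0} \E\Bigl[\sum_{d \in \mathcal{D}_k} \1\{T_d > \tau\}\Bigr].
\]
Conditional on the type ancestry $(p_0, p_1, \ldots, p_k)$ of $d$, the birth time $T_d$ is a sum of $k$ independent inter-arrival delays with densities $h_{p_{i-1} p_i}/\lVert h_{p_{i-1} p_i} \rVert_{L^1}$. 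The $(1+\beta)$-moment assumption gives a uniform bound on the $L^{1+\beta}$-norm of each such delay, so Minkowski's inequality yields $\E[T_d^{1+\beta} \mid \text{ancestry}] \leq C k^{1+\beta}$ and Markov's inequality then gives $\P(T_d > \tau \mid \text{ancestry}) \leq C k^{1+\beta}/\tau^{1+\beta}$. Under Assumption \ref{ass:spectral_radius}, the mean generation sizes $\E[\lvert \mathcal{D}_k \rvert]$ decay geometrically (as $\rho^k$ with $\rho < 1$ the spectral radius of $\M$), so the series $\sum_k k^{1+\beta} \E[\lvert \mathcal{D}_k \rvert]$ converges and we obtain $\P(W^{(j)} > \tau) = \bigO{\tau^{-(1+\beta)}}$.

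The main obstacle, and the essential novelty with respect to the univariate proof of \citet{Cheysson2022}, is the loss of the independence between arrival times and per-generation offspring counts within a single cluster: in the multivariate setting the type of each individual simultaneously governs its reproduction law and the distribution of its delay, so the factorisation used in the univariate proof is unavailable. My way around this is to condition on the full type sequence along each ancestral line, which restores conditional independence of the delays while preserving a uniform $(1+\beta)$-moment bound, and then to re-aggregate via the union bound above controlled by the geometric decay of $\E[\lvert \mathcal{D}_k \rvert]$. This is precisely the step where the ``exponential inequality for the multitype branching process'' alluded to in the introduction enters: concentration on the generation sizes (or on the total population) of the branching tree is needed to exchange cleanly the per-descendant Markov bound with the enumeration over the tree without incurring super-polynomial losses in $k$.
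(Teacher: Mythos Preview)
Your approach is correct and genuinely different from the paper's. The paper proceeds via positive association: it first bounds $\lvert\Cov(\1_\mathcal{A}, \1_\mathcal{B})\rvert$ by $\lvert\Cov(\lVert\N(A)\rVert_1, \lVert\N(B)\rVert_1)\rvert$ (Proposition~\ref{prop:poinas}), conditions on the immigrant process to reduce to single-cluster covariances $\Cov(N_i(A\mid y), N_j(B\mid y))$ (Lemma~\ref{lem:conditioning}), expands these over pairs of generations $(k,l)$, and then applies a three-term H\"older inequality to decouple the generation sizes $Z_{ki}, Z_{lj}$ from the arrival times $T_{lj}^m$. That H\"older step is exactly where the Laplace-transform exponential bounds on $\P(Z_{ki}\ge n)$ (Lemmas~\ref{lem:laplace_Z} and~\ref{lem:summability_Z}) are required, and it is also the source of the loss from $\beta$ to any $\gamma<\beta$ in the final rate. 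Your coupling reduction to the tail of the cluster extent $W^{(j)}$, followed by a first-moment union bound over the tree, bypasses all of this: only the geometric decay of $\E[\lvert\mathcal{D}_k\rvert]$ is used, no concentration on generation sizes is needed, and you in fact obtain $\alpha_\N(\tau)=\bigO{\tau^{-\beta}}$, slightly sharper than what the theorem states.

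Your final paragraph therefore mis-diagnoses your own argument. The ``exponential inequality for the multitype branching process'' is indeed the key device in the paper's H\"older-based route, but your union-bound route does not need it: the dependence between arrival times and offspring counts in the multitype tree is a genuine obstruction for the paper's decomposition, yet it is invisible to a first-moment computation conditioned on the type path, since the mean measure of generation-$k$ descendants factorises exactly as $\sum_{p}\prod_l M_{p_{l-1}p_l}\cdot\P(\sum_l\Delta_l>\tau)$. What the paper's machinery buys is a faithful extension of the univariate template of \citet{Cheysson2022}; what yours buys is a more elementary argument and a sharper exponent.
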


While the proof (postponed to Section \ref{sec:proof}) mostly extends that of \cite{Cheysson2022} to the multivariate case, it also circumvents a lemma in the proof of the aforementioned article: in the univariate Galton-Watson process, the number of individuals in any given generation is independent from the arrival times of all individuals, and vice versa, whereas this is only valid conditionally on the genealogy of individuals in the multivariate case.
This is the heart of the proof and the focus of Lemmas \ref{lem:laplace_Z} and \ref{lem:summability_Z}, which aim to prove that we can make do with exponential inequalities instead of independence when bounding the strong mixing coefficient, thanks to the asymptotic contraction property of the reproduction matrix $\M$, ensured by Assumption \ref{ass:spectral_radius}.

\subsection{An application to statistics}\label{sec:appli}
\subsubsection{A functional central limit theorem}

Suppose that we observe a point process $\N = (N_1, \ldots, N_d)$ from a stationary multivariate Hawkes model with intensities $m_i \ge 0$, and that we are interested in the asymptotic behaviour of statistics of the form 
\begin{equation*}
\sum_{i = 1}^d N_{i}(f_i\big\vert_{\![0,T]}) = \sum_{i=1}^d \int_{[0,T]} f_i(x) N_i(\dd x),
\end{equation*}
or rather their centered version, 
\begin{equation}\label{eqn:partial_sums}
S_T = \sum_{i=1}^d \int_{[0,T]} f_i(u) \bigl(N_i(\dd u) - m_i\dd u \bigr),
\end{equation}
where $f_i: \mathbb R \to \mathbb R$ denote bounded measurable functions ($1 \le i \le d$) and $f_i\big\vert_{\![0,T]}: x \mapsto f_i(x) \1_{[0,T]}(x)$ their restriction to $[0,T]$.
The variance of $S_T$ can be related to the covariance measure of $\N$, whose elements are denoted $C_{ij}$, through the use of Campbell's formula: 
\begin{equation*}
\sigma_T^2 = \Var S_T = \sum_{i = 1}^d \sum_{j=1}^d \iint_{[0,T]^2} f_i(u)f_j(v) C_{ij}(\dd u \times \dd v).
\end{equation*}
Finally, for $0 \le u \le 1$, define
\begin{equation*}
v_T(u) = \inf \{ t \in (0, T): \sigma_t^2 / \sigma_T^2 \ge u \} \quad \text{and} \quad W_T(u) = \sigma_T^{-1} S_{v_T(u)}.
\end{equation*}

The following proposition is a consequence of the functional central limit theorem for nonstationary strongly mixing processes in \citet[Corollary 2.2]{Merlevede2020}, in conjunction with the mixing property derived in Theorem \ref{thm:strong_mixing}.
Its proof is deferred until Section \ref{sec:proof_fclt}.

\begin{proposition}\label{prop:fclt}
	Let $\N$ be a multivariate Hawkes process satisfying the conditions of Theorem \ref{thm:strong_mixing}.
	Assume also that $\sigma_T^2 = T \sigma^2 + o(T)$ as $T \to \infty$, and that each $f_i$ is locally $(2+\delta)$-integrable, with $\beta, \delta > 0$, such that $(\beta-1)\delta > 2$.
	Then, $\{W_T(u), u \in (0,1)\}$ converges in distribution in the Skorokhod space $D([0,1])$ (equipped with the uniform topology) to the standard Brownian motion.
\end{proposition}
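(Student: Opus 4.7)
The plan is to reduce the statement to the functional central limit theorem for non-stationary strongly mixing triangular arrays of \citet[Corollary 2.2]{Merlevede2020}. First, I would partition $[0,T]$ into unit-length blocks and rewrite the partial sum as
\begin{equation*}
S_T = \sum_{k=1}^{\lfloor T \rfloor} X_{T,k} + R_T,
\qquad
X_{T,k} = \sum_{i=1}^d \int_{(k-1, k]} f_i(u)\bigl(N_i(\dd u) - m_i \dd u\bigr),
\end{equation*}
with $R_T$ a boundary remainder of order $\bigO{1}$ in probability. Each $X_{T,k}$ is centered and $\mathcal{E}_{k-1}^k$-measurable, so Theorem \ref{thm:strong_mixing} provides a strong mixing coefficient for the row $(X_{T,k})_k$ bounded by $\alpha_\N(\tau-1) = \bigO{\tau^{-\gamma}}$ for any $0 < \gamma < \beta$.

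The next step is to verify the $(2+\delta)$-moment hypothesis. By local $(2+\delta)$-integrability of each $f_i$ together with standard moment bounds for stochastic integrals against counting measures, one obtains
\begin{equation*}
\sup_{T,k} \E\lvert X_{T,k}\rvert^{2+\delta} < \infty,
\end{equation*}
the crucial point being that under Assumption \ref{ass:spectral_radius} the number $N_i((k-1,k])$ of arrivals in a unit window admits finite moments of every order, as a consequence of the Poisson cluster representation together with the contraction property of $\M$. The mixing hypothesis of \citet[Corollary 2.2]{Merlevede2020} is then of Doukhan type, namely $\sum_{\tau \ge 1} \alpha_{X_T}(\tau)^{\delta/(2+\delta)} < \infty$. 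With $\alpha_{X_T}(\tau) = \bigO{\tau^{-\gamma}}$, this holds as soon as $\gamma\delta/(2+\delta) > 1$, equivalently $\gamma > 1 + 2/\delta$; since $\gamma$ can be taken arbitrarily close to $\beta$, such a choice is available precisely under the assumption $(\beta-1)\delta > 2$. The normalization $\sigma_T^2 = T\sigma^2 + o(T)$ combined with the time change through $v_T$ then matches the setup of \citet{Merlevede2020}, yielding convergence of $W_T$ to standard Brownian motion in $D([0,1])$ under the uniform topology.

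The step I expect to require the most care is the uniform moment bound $\sup_{T,k}\E\lvert X_{T,k}\rvert^{2+\delta} < \infty$: in the multivariate setting, per-generation population sizes of the underlying Galton--Watson process are correlated across types, so controlling the higher moments of $N_i((k-1,k])$ ultimately rests on the same exponential estimates for the multitype branching process developed in Lemmas \ref{lem:laplace_Z} and \ref{lem:summability_Z} for the proof of Theorem \ref{thm:strong_mixing}. Once this is secured, the negligibility of $R_T$ and the identification of the long-run variance $\sigma^2$ with the Bartlett spectrum at zero (as alluded to in the introduction) follow from routine computations, and the remaining hypotheses of \citet[Corollary 2.2]{Merlevede2020} are matched by plugging in the parameters.
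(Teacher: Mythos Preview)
Your proposal is correct and follows essentially the same route as the paper: block the process into unit intervals, control the $(2+\delta)$-moments of the blocks via the exponential estimates for the multitype Galton--Watson process, and invoke \citet[Corollary~2.2]{Merlevede2020} together with Theorem~\ref{thm:strong_mixing}, handling non-integer $T$ by a remainder that is negligible after normalisation. One small discrepancy worth flagging: the paper records the mixing hypothesis from \citet{Merlevede2020} as $\sum_{\tau\ge 1}\tau^{2/\delta}\alpha(\tau)<\infty$ rather than the Doukhan-type condition $\sum_{\tau\ge 1}\alpha(\tau)^{\delta/(2+\delta)}<\infty$ you quote, but for the polynomial rate $\alpha_{\N}(\tau)=\bigO{\tau^{-\gamma}}$ both reduce to the same constraint $\gamma>1+2/\delta$, so your derivation of $(\beta-1)\delta>2$ is unaffected.
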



Note also that this result implies that $S_T$ satisfies the classical central limit theorem (see \citealp[Section 2.1.3]{Merlevede2020}).
\begin{corollary}\label{cor:clt}
	Let $\N$ be a multivariate Hawkes process as in Proposition \ref{prop:fclt}.
	Then $\sigma_T^{-1} S_T$ converges in distribution to the standard normal distribution.
\end{corollary}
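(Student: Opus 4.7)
The plan is to derive the classical central limit theorem as an immediate consequence of the functional statement in Proposition \ref{prop:fclt}, by evaluating the process $W_T$ at the right endpoint $u=1$ and invoking the continuous mapping theorem.

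First, I would identify $W_T(1)$ with $\sigma_T^{-1} S_T$. The map $t \mapsto \sigma_t^2$ is continuous in $t$, since
\[
\sigma_t^2 = \sum_{i,j=1}^d \iint_{[0,t]^2} f_i(u)f_j(v)\, C_{ij}(\dd u \times \dd v)
\]
depends continuously on the upper limit via the locally finite covariance measure $C_{ij}$. Consequently, from the definition $v_T(u) = \inf\{t \in (0,T] : \sigma_t^2/\sigma_T^2 \ge u\}$, we have $v_T(1) = T$, and hence $W_T(1) = \sigma_T^{-1} S_{v_T(1)} = \sigma_T^{-1} S_T$.

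Next, Proposition \ref{prop:fclt} provides the convergence $W_T \Rightarrow B$ in $D([0,1])$ equipped with the uniform topology, where $B$ denotes the standard Brownian motion. Since the evaluation functional $\pi_1 : D([0,1]) \to \mathbb R$, $f \mapsto f(1)$, is continuous with respect to the uniform topology, the continuous mapping theorem yields
\[
\sigma_T^{-1} S_T \;=\; W_T(1) \;\xrightarrow{d}\; B(1) \;\sim\; \mathcal{N}(0,1),
\]
which is the desired statement.

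There is no genuine obstacle here: the corollary is essentially a one-line consequence of the functional version, and the only verification required is the continuity of $t \mapsto \sigma_t^2$ needed to justify $v_T(1) = T$. This is precisely the route indicated by the reference to \citet[Section 2.1.3]{Merlevede2020}, where the classical CLT is systematically recovered from the functional CLT under the same set of assumptions.
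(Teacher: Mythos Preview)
Your proposal is correct and follows essentially the same route as the paper: the paper simply observes that the functional CLT implies the classical one and points to \citet[Section 2.1.3]{Merlevede2020}, while you spell out the standard argument (evaluate $W_T$ at $u=1$ and apply the continuous mapping theorem). One minor remark: the paper defines $v_T(u)$ as an infimum over the open interval $(0,T)$ rather than $(0,T]$, so strictly speaking $v_T(1)=T$ follows by convention (infimum of the empty set taken as $T$) rather than from continuity of $t\mapsto\sigma_t^2$; but this is a cosmetic point and does not affect the validity of your argument.
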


However, in practice, the covariance measure of the Hawkes process has no explicit form (except in the case of exponential kernels, see \citealp{DaFonseca2015}), so we can turn to the spectral properties of the process.
Recall that the Bartlett spectrum of a stationary multivariate point process $\mathbf N$ on $\mathbb R$ is defined as the unique, positive-definite, Hermitian measure $\mathbf \Gamma$ of auto- and cross-spectral measures $(\Gamma_{ij}(\cdot))$ such that, for any rapidly decaying function $f$ and $g$ on $\mathbb R$ \citep[chapter~8]{Daley2003}, 
\begin{equation}\label{eqn:parseval}
\mathrm{Cov}\;\bigl(N_i(f), N_j(g)\bigr) = \int_{\mathbb R} \mathcal Ff(\xi) \mathcal Fg^\ast(\xi) \Gamma_{ij}(\dd\xi),
\end{equation} 
where $\mathcal F\cdot$ denotes the Fourier transform,
\begin{equation}\label{eqn:fourier}
\mathcal Ff(\xi) = \int_{\mathbb R} e^{-2i\pi\xi u} f(u) \dd u,
\end{equation}
and $g^\ast(u) = g(-u)$, so that $\mathcal Fg^\ast$ is the complex conjugate of $\mathcal Fg$.

It turns out that the Bartlett spectrum for the multivariate Hawkes process is known, and has density given by 
\begin{equation*}
\bm\gamma(\bm\xi) = \left( \mathbf I - [\widetilde{\mathbf H}(2\pi\xi)]^\intercal \right)^{-1} \mathrm{diag}(m_1, \ldots, m_d) \left( \mathbf I - \widetilde{\mathbf H}(-2\pi\xi) \right)^{-1},
\end{equation*}
with $\mathrm{diag}(m_1, \ldots, m_d) = (\mathbf I - [\widetilde{\mathbf H}(0)]^\intercal)^{-1} \bm\eta$; discrepancies with the expression in \citet[example~8.3(c)]{Daley2003} can be attributed to the definition of the reproduction matrix $H$ and our choice of convention for the Fourier transform \eqref{eqn:fourier}.
Then the variance of $S_T$ can be calculated by the relation 
\begin{equation}\label{eqn:variance}
\sigma_T^2 = \Var S_T = \sum_{i = 1}^d \sum_{j=1}^d \int_{\mathbb R} \mathcal F f_i\big\vert_{\![0,T]}(\xi) \mathcal F f_j \big\vert_{\![0,T]} ^\ast(\xi) \gamma_{ij}(\xi) \dd\xi.
\end{equation}

\subsubsection{Two examples}
Though the integral appearing in \eqref{eqn:variance} can be difficult to compute analytically, we highlight two cases where the computation is explicit, so that it may be used in applications.

\paragraph{Asymptotically constant functions.}
Assume each $f_i$ is of the form $f_i(\cdot) = k_i + g_i(\cdot)$ with $k_i$ constants such that $\lVert \mathbf k \rVert_1 > 0$ and $g_i$ bounded and integrable functions.
Then, we find that 
\begin{equation*}
	\sigma_T^2 = T \sum_{i=1}^d \sum_{j=1}^d k_i k_j \gamma_{ij}(0) + o(T) = T \mathbf k^\intercal \bm\gamma(0) \mathbf k + o(T),
\end{equation*}
yielding that the process $\bigl\{(T \mathbf k^\intercal \bm\gamma(0) \mathbf k)^{-1/2} S_{Tu}, u \in (0,1)\bigr\}$ converges in distribution to the standard Brownian motion, and $(T \mathbf k^\intercal \bm\gamma(0) \mathbf k)^{-1/2} S_T$ to the standard normal distribution.

\begin{proof}
	Since $\mathcal F \mathbbm{1}_{[0,T]}(\xi) = T e^{-i\pi\xi T} \sinc(\pi\xi T)$, the leading term in the variance \eqref{eqn:variance} is given by
	\begin{align*}
		\sum_{i = 1}^d \sum_{j=1}^d k_i k_j \int_{\mathbb R} \mathcal F \mathbbm{1}_{[0,T]} (\xi) \mathcal F \mathbbm{1}_{[0,T]}^\ast(\xi) \gamma_{ij}(\xi) \dd\xi
		&=\sum_{i = 1}^d \sum_{j=1}^d k_i k_j T^2 \int_{\mathbb R} \sinc^2(\pi\xi T) \gamma_{ij}(\xi) \dd\xi\\
		&= \sum_{i = 1}^d \sum_{j=1}^d k_i k_j T \int_{\mathbb R} \sinc^2(\pi\omega) \gamma_{ij}(\omega/T) \dd\omega. 
	\end{align*}
	The dominated convergence theorem then yields that 
	\begin{equation*}
		\int_{\mathbb R} \sinc^2(\pi\omega)\gamma_{ij}(\omega/T) \dd\omega \xrightarrow[T\to\infty]{} \gamma_{ij}(0) \int_{\mathbb R} \sinc^2(\pi\omega) \dd\omega = \gamma_{ij}(0).
	\end{equation*}
\end{proof}

\paragraph{Asymptotically periodic functions.}
Assume each $f_i$ is of the form $f_i(\cdot) = h_i(\cdot) + g_i(\cdot)$, with $h_i$ $\tau$-periodic functions such that $\sum_{i=1}^d \int_0^\tau h_i(t) \dd t \ne 0$ and $g_i$ bounded and integrable functions. Then
\begin{equation*}
	\sigma_T^2 = \frac{T}{\tau^2} \sum_{i=1}^d \sum_{j=1}^d \sum_{k\in\mathbb Z} \mathcal Fh_i\big\vert_{\![0,\tau]}\left(\frac{k}{\tau}\right) \mathcal Fh_j\big\vert_{\![0,\tau]}^\ast \left(\frac{k}{\tau}\right) \gamma_{ij}\left(\frac{k}{\tau}\right) + o(T),
\end{equation*}
so that the process $\bigl\{ \sigma_T^{-1/2} S_{Tu}, u \in (0,1) \bigr\}$ has the desired convergence to the standard Brownian motion.

\begin{proof}
	Choose $T = n \tau$ with $n \in \mathbb N_+$.
	Then,
	\begin{align*}
		\mathcal F h_i\big\vert_{\![0,T]}(\xi) 
		&= \sum_{k=1}^n \int_{(k-1)\tau}^{k\tau} e^{-2i\pi\xi x} h_i(x) \dd x\\
		&= \sum_{k=1}^n e^{-2i\pi\xi(k-1)\tau} \mathcal F h_i\big\vert_{\![0,\tau]}(\xi),
	\end{align*}
	so that the leading term in the variance \eqref{eqn:variance} is given by
	\begin{multline*}
		\sum_{i = 1}^d \sum_{j=1}^d \int_{\mathbb R} \mathcal F h_i\big\vert_{\![0,T]} (\xi) \mathcal F h_j\big\vert_{\![0,T]}^\ast(\xi) \gamma_{ij}(\xi) \dd\xi\\
		= \sum_{i = 1}^d \sum_{j=1}^d \int_{\mathbb R} \sum_{k=1}^n \sum_{l=1}^n e^{-2i\pi\xi(k-l)\tau} \mathcal F h_i\big\vert_{\![0,\tau]} (\xi) \mathcal F h_j\big\vert_{\![0,\tau]}^\ast(\xi) \gamma_{ij}(\xi) \dd\xi.
	\end{multline*}
	A bit of calculus gives that
	\begin{align*}
		\sum_{k=1}^n \sum_{l=1}^n e^{-2i\pi\xi(k-l)\tau} 
		&= n + 2 \Re \left( e^{2i\pi\xi n\tau} \sum_{k=1}^{n-1} k e^{-2i\pi\xi k\tau} \right)\\
		&= \frac{1 - \cos(2\pi\xi n\tau)}{1 - \cos(2\pi\xi\tau)}
	\end{align*}
	so that, for any $\phi$ absolutely integrable,
	\begin{align*}
		\frac{1}{n} \int_{\mathbb R} \frac{1 - \cos(2\pi\xi n\tau)}{1 - \cos(2\pi\xi\tau)} \phi(\xi) \dd\xi 
		&= \frac{1}{n} \sum_{k \in \mathbb Z} \int_{-1/2\tau}^{1/2\tau} \frac{1 - \cos(2\pi\xi n\tau)}{1 - \cos(2\pi\xi\tau)} \phi\left(\xi + \frac{k}{\tau}\right) \dd\xi\\
		&= \frac{1}{2\pi n^2 \tau} \sum_{k \in \mathbb Z} \int_{-\pi n}^{\pi n} \frac{1 - \cos(\xi)}{1 - \cos(\xi/n)} \phi\left(\frac{\xi}{2\pi n\tau} + \frac{k}{\tau}\right) \dd\xi\\
		&= \frac{1}{2\pi\tau} \sum_{k \in \mathbb Z} \int_{-\infty}^{\infty} \frac{\1_{[-\pi n, \pi n]}(\xi)}{n^2} \frac{1 - \cos(\xi)}{1 - \cos(\xi/n)} \phi\left(\frac{\xi}{2\pi n\tau} + \frac{k}{\tau}\right) \dd\xi.
	\end{align*}
	Since the integrand is bounded above by $\xi \mapsto \lvert \phi(\xi/(2\pi n\tau) + k/\tau) \rvert$ which is integrable, and
	\begin{equation*}
		\lim_{n\to\infty} \frac{\1_{[-\pi n, \pi n]}(\xi)}{n^2} \frac{1 - \cos(\xi)}{1 - \cos(\xi/n)} \phi\left(\frac{\xi}{2\pi n\tau} + \frac{k}{\tau}\right) = 2 \, \frac{1 - \cos(\xi)}{\xi^2} \, \phi\left(\frac{k}{\tau}\right),
	\end{equation*}
	the dominated convergence theorem yields that 
	\begin{equation*}
		\lim_{n\to\infty} \int_{-\infty}^{\infty} \frac{\1_{[-\pi n, \pi n]}(\xi)}{n^2} \frac{1 - \cos(\xi)}{1 - \cos(\xi/n)} \phi\left(\frac{\xi}{2\pi n\tau} + \frac{k}{\tau}\right) \dd\xi = 2\, \phi\left(\frac{k}{\tau}\right) \int_{-\infty}^\infty \frac{1 - \cos(\xi)}{\xi^2} \dd\xi = 2\pi \phi\left(\frac{k}{\tau}\right).
	\end{equation*}
	Hence, 
	\begin{equation*}
		\xi \mapsto \frac{1}{n} \frac{1 - \cos(2\pi\xi n\tau)}{1 - \cos(2\pi\xi\tau)}~\text{converges, in the sense of distributions, to}~\xi \mapsto \frac{1}{\tau} \sum_{k\in\mathbb Z} \delta_{k/\tau}(\xi),
	\end{equation*}
	yielding the desired variance.
	
	Finally, when $T \ne n\tau$, write $T = n\tau + \varepsilon$ with $\varepsilon < \tau$ so that the leading term in the variance remains the same.
\end{proof}

\section{Proof of Theorem \ref{thm:strong_mixing}}\label{sec:proof}
Most of the proofs follow those of \citet{Cheysson2022}, extended here to the multivariate setup.
We first present the layout of the proof, introducing auxiliary results when needed, then in subsequent subsections prove these results.

\subsection{Layout of the proof}
Recall that the strong mixing coefficient of a point process $\N$ is defined by
\begin{equation*}
\alpha_{\N}(\tau) = \alpha(\mathcal{E}_{-\infty}^t, \mathcal{E}_{t+\tau}^{+\infty}) 
= \underset{t \in \mathbb{R}}{\mathrm{sup}} 
~\underset{\begin{subarray}{c}
	\mathcal{A} \in \mathcal{E}_{-\infty}^t \\
	\mathcal{B} \in \mathcal{E}_{t+\tau}^\infty
	\end{subarray}}
{\mathrm{sup}}  ~\big| \mathrm{Cov}\big(\mathbbm{1}_\mathcal{A}(\N), \mathbbm{1}_\mathcal{B}(\N)\big) \big|.
\end{equation*}
The first step of the proof is to bound the covariance of the indicator functions with that of the point process itself, using the positive association of Hawkes processes \citep[Section 2.1, key property (e)]{Gao2018}.
Recall that a $\mathcal X$-valued random variable $X$ is positively associated if for each pair of bounded, Borel measurable, non-decreasing functions $f, g: \mathcal X \to \Rr$, we have $\Cov(f(X), g(X)) \ge 0$.
This property can be used to bound the covariance of an associated point process, using Theorem 2.5 from \citet{Poinas2019}, which we extend to the multivariate setup.
\begin{proposition}
	\label{prop:poinas}
	Let $\N=(N_1, \cdots, N_d)$ be an associated multivariate point process and $A,B \subset \R$ two disjoint bounded subsets of $\R$. Let $f:\mathfrak N \to \R$ and $g:\mathfrak N \to \R$ be two functions such that $f(\N_{|A})$ and $g(\N_{|B})$ are bounded, then 
	\[\left|\Cov \Big(f(\N_{|A}), g(\N_{|B})\Big)\right| \leq \left\|f\right\|_A \left\|g\right\|_B \left|\Cov\Big(\lVert\N(A)\rVert_1, \lVert\N(B)\rVert_1\Big)\right|, \]
	where \[\left\|f\right\|_A:=\sup_{\mathbf N\in\mathfrak N}\ \sup_{\substack{1 \le i \le d\\x\in A}} |f(\mathbf N_{|A} + \bm\delta_{ix})-f(\mathbf N_A)|, \]
	and $\bm\delta_{ix}$ denotes the multivariate point process with a single atom at component $i$ and position $x$.
\end{proposition}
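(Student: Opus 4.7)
The plan is to reduce $\lvert\Cov(f(\N_{|A}), g(\N_{|B}))\rvert$ to the simpler quantity $\Cov(\lVert\N(A)\rVert_1, \lVert\N(B)\rVert_1)$ by manufacturing, out of $f$, $g$ and the counts $\lVert\N(\cdot)\rVert_1$, four bounded non-decreasing functions of $\N$, and then applying positive association of the multivariate Hawkes process invoked from \citet{Gao2018}.

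First, I would introduce the augmented functions
\begin{equation*}
F_\pm(\N) \coloneqq \lVert f\rVert_A \lVert\N(A)\rVert_1 \pm f(\N_{|A}), \qquad G_\pm(\N) \coloneqq \lVert g\rVert_B \lVert\N(B)\rVert_1 \pm g(\N_{|B}),
\end{equation*}
which are bounded because $A$ and $B$ are bounded and $f(\N_{|A}), g(\N_{|B})$ are assumed bounded. The crucial verification is that all four are non-decreasing with respect to the componentwise inclusion order on $\mathfrak N$. Since every configuration increase decomposes into single-atom additions $\bm\delta_{ix}$, it suffices to check two cases: if $x \notin A$ then $\lVert\N(A)\rVert_1$ and $\N_{|A}$ are both unaffected, so $F_\pm$ is unchanged; if $x \in A$ then $\lVert\N(A)\rVert_1$ rises by exactly $1$, while the increment of $f(\N_{|A})$ lies in $[-\lVert f\rVert_A, \lVert f\rVert_A]$ by the very definition of $\lVert f\rVert_A$, so $F_\pm$ changes by at least $\lVert f\rVert_A - \lVert f\rVert_A = 0$. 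The argument for $G_\pm$ is identical.

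Next, I would apply positive association of $\N$ four times to obtain $\Cov(F_{\varepsilon_1}, G_{\varepsilon_2}) \ge 0$ for every $(\varepsilon_1, \varepsilon_2) \in \{+,-\}^2$. Because the count terms are common to $F_+$ and $F_-$ (respectively $G_+$ and $G_-$), a direct bilinear expansion gives
\begin{equation*}
\Cov(F_+, G_+) + \Cov(F_-, G_-) = 2\lVert f\rVert_A \lVert g\rVert_B \Cov(\lVert\N(A)\rVert_1, \lVert\N(B)\rVert_1) + 2\Cov(f(\N_{|A}), g(\N_{|B})) \ge 0,
\end{equation*}
together with the sign-flipped identity
\begin{equation*}
\Cov(F_+, G_-) + \Cov(F_-, G_+) = 2\lVert f\rVert_A \lVert g\rVert_B \Cov(\lVert\N(A)\rVert_1, \lVert\N(B)\rVert_1) - 2\Cov(f(\N_{|A}), g(\N_{|B})) \ge 0.
\end{equation*}
Combining these two inequalities yields the claimed bound. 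Since $\lVert\N(A)\rVert_1$ and $\lVert\N(B)\rVert_1$ are themselves non-decreasing in $\N$, their covariance is non-negative by association, so the modulus on the right-hand side of the statement is merely cosmetic.

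The only delicate point is the monotonicity check for $F_\pm, G_\pm$: one must remember that the supremum in the definition of $\lVert f\rVert_A$ is taken over \emph{every} configuration $\mathbf N$ and every type $i$ as well as every $x \in A$, so that it genuinely controls the change in $f(\N_{|A})$ under any single-atom perturbation within $A$. Once this is in place, everything else is bilinear algebra on covariances and the cited association property, so no further obstacle is anticipated.
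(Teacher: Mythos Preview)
Your approach is essentially the paper's: augment $f$ and $g$ by multiples of the counts so as to manufacture monotone test functions, apply association, and expand bilinearly. Your use of all four sign combinations $(\varepsilon_1,\varepsilon_2)\in\{+,-\}^2$ is a cosmetic variant of the paper's two-combination argument followed by the substitution $f\mapsto -f$.

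There is, however, a genuine gap. You assert that $F_\pm$ and $G_\pm$ are bounded ``because $A$ and $B$ are bounded and $f(\N_{|A}), g(\N_{|B})$ are assumed bounded''. This is false: boundedness of $A$ and local finiteness of $\N$ guarantee only that $\lVert\N(A)\rVert_1$ is almost surely finite, not that it is uniformly bounded as a function on $\mathfrak N$. Since the definition of positive association in the paper requires \emph{bounded} non-decreasing test functions, the inequality $\Cov(F_{\varepsilon_1},G_{\varepsilon_2})\ge 0$ cannot be invoked directly. The paper patches this explicitly: it observes that $f_+$ is non-negative (up to an additive constant, since $f$ is bounded) and almost surely finite, approximates it by the bounded non-decreasing truncations $\min(f_+,k)$, applies association to those, and passes to the limit $k\to\infty$. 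You should insert the same truncation step; once that is done, the rest of your argument goes through.
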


For the Hawkes process, this leads to the following lemma, whose proof uses the Poisson cluster structure of the process.
\begin{lemma}
	\label{lem:conditioning}
	Let $\N$ be a multivariate Hawkes process satisfying Assumption \ref{ass:spectral_radius}.
	Let $s,t,v\in\R$ and $\tau>0$ such that $s<t<t+\tau<v$, $\mathcal{A}\in\mathcal{E}_{s}^t$, $\mathcal{B}\in\mathcal{E}^v_{t+\tau}$, and $A=(s,t]$ and $B= (t+\tau,v]$. Then 
	\begin{equation}
	\label{eqn:conditioning}
	\left| \Cov\Big(\mathbbm{1}_\mathcal{A}(\N), \1_\mathcal{B}(\N)\Big) \right| \leq \sum_{i=1}^{d} \sum_{j=1}^{d} \sum_{k=1}^{d}\int \left| \Cov\Big(N_i(A \mid y), N_j(B \mid y) \Big) \right| M_k^c(dy),
	\end{equation}
	where $M_k^c$ denotes the first-order moment measure of the $k$-th component $N_k^c$ of the cluster centre process.
\end{lemma}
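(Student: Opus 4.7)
The plan is to proceed in two stages: first reduce the covariance of indicator functionals to the covariance of integer-valued counts via positive association, then exploit the Poisson cluster structure of the Hawkes process to express the resulting covariance as an integral over cluster centres.

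For the first stage, I would invoke the fact that the multivariate linear Hawkes process is positively associated \citep[Section~2.1]{Gao2018}, so that Proposition \ref{prop:poinas} applies with $f = \1_\mathcal{A}$ and $g = \1_\mathcal{B}$. Since indicator functions take values in $\{0,1\}$, adding a single atom changes their value by at most $1$, so $\|\1_\mathcal{A}\|_A \le 1$ and $\|\1_\mathcal{B}\|_B \le 1$. Proposition \ref{prop:poinas} therefore yields
\[
\left|\Cov\bigl(\1_\mathcal{A}(\N), \1_\mathcal{B}(\N)\bigr)\right|
\le \left|\Cov\bigl(\|\N(A)\|_1, \|\N(B)\|_1\bigr)\right|.
\]
Expanding $\|\N(A)\|_1 = \sum_{i=1}^d N_i(A)$, together with bilinearity of covariance and the triangle inequality, reduces the task to bounding $\sum_{i,j} \lvert \Cov(N_i(A), N_j(B))\rvert$.

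For the second stage, I would appeal to the Poisson cluster representation of $\N$. Writing an immigrant of type $k$ at position $y$ as an atom of the independent homogeneous Poisson process $N_k^c$ (with mean measure $M_k^c$), and denoting by $N_i(A\mid y)$ the number of type-$i$ atoms in $A$ coming from the cluster started at such an immigrant, the superposition identity reads
\[
N_i(A) = \sum_{k=1}^d \int N_i(A\mid y)\, N_k^c(\mathrm dy).
\]
Conditional on the immigrant process, the clusters at distinct centres are independent, and the $N_k^c$ are themselves mutually independent. Applying Campbell's second-moment formula for marked Poisson processes, the contribution of pairs of distinct centres cancels against the product of means, leaving only the diagonal (same-centre) contribution:
\[
\Cov\bigl(N_i(A), N_j(B)\bigr)
= \sum_{k=1}^d \int \Cov\bigl(N_i(A\mid y), N_j(B\mid y)\bigr)\, M_k^c(\mathrm dy),
\]
where $\Cov$ on the right is taken with respect to the cluster's internal randomness. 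Bringing absolute values inside the integral and summing over $i,j$ gives the stated bound.

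The main obstacle is the passage to the cluster level: one must justify that, despite the clusters $N_i(\cdot\mid y)$ being rich random objects (entire multitype branching processes), the Poisson structure of the immigrants makes only the diagonal Campbell term survive. This relies crucially on the mutual independence of the $N_k^c$ and on Slivnyak-type reasoning for marked Poisson processes. Care must also be taken that the clusters are causal (offspring lie to the right of the centre), so that immigrants with $y>t$ contribute $N_i(A\mid y)=0$ and the integrals are finite under Assumption \ref{ass:spectral_radius}; this is what makes the subsequent mixing argument tractable, by reducing a covariance between distant cylinder events to a quantity driven by how far a single cluster can spread in time.
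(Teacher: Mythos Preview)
Your proposal is correct and follows essentially the same route as the paper: apply Proposition~\ref{prop:poinas} with $f=\1_\mathcal{A}$, $g=\1_\mathcal{B}$ (using $\lVert \1_\mathcal{A}\rVert_A,\lVert \1_\mathcal{B}\rVert_B\le 1$), expand $\lVert \N(\cdot)\rVert_1$ by bilinearity, and then use the Poisson cluster decomposition so that the cross-centre contribution vanishes because the immigrant process is Poisson. The paper phrases the last step via the general cluster-process covariance identity (\citealp[Exercise~6.3.4]{Daley2003}) and the vanishing of the centre covariance measure $C_{kl}^c$, which is exactly your Campbell/Slivnyak argument in slightly different language.
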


Without loss of generality, we consider a cluster whose immigrant is located at time $0$. Let $Z_{ki}$ denote the number of points of generation $k$ of the $i$-th component of the point process, and $Z_{ki}(A)$ those that are located in the set $A \subset \mathbb R$. 
By definition, we have 
\begin{equation*}
N_i(A \mid 0)= \sum_{k=0}^\infty Z_{ki}(A).
\end{equation*}
Then, the covariance between two components of a single branching process can be written
\begin{equation}
\label{eqn:covariance_branching_process}
\Cov\Big(N_i(A \mid 0), N_j (B \mid 0) \Big) = \sum_{k=0}^\infty\sum_{l=0}^\infty \Cov\Big(Z_{ki}(A),Z_{lj}(B)\Big).
\end{equation}

As $Z_{ki}(A),Z_{lj}(B) \geq 0$, the terms appearing in the sum on the right hand side can be bounded by
\begin{align*}
\left|\Cov(Z_{ki}(A),Z_{lj}(B))\right|  &= \left|\E\big[Z_{ki}(A) Z_{lj}(B)\big] - \E\big[Z_{ki}(A)\big]\E\big[Z_{lj}(B)\big] \right| \\
&\leq \max \Bigl(\E\bigl[Z_{ki}(A)Z_{lj}(B)\bigr],\E\bigl[Z_{ki}(A)\bigr]\E\bigl[Z_{lj}(B)\bigr]\Bigr).
\end{align*}

Let $T_{lj}^m$ denote the $m$-th time of arrival of generation $l$ and type $j$, and consider the first term appearing in the maximum above. 
Noting that for a non-negative integer $I$, $I = \sum_{n=1}^\infty \mathbbm 1_{\{I \ge n\}}$, we have that
\begin{align}
\E[Z_{ki}(A) Z_{lj}(B)] \leq \E[Z_{ki}(\mathbb R) Z_{lj}(B)] 
&=\sum_{n=1}^\infty\sum_{m=1}^\infty \E\Big[\mathbbm{1}_{\{Z_{ki}\geq n\}}\mathbbm{1}_{\{Z_{lj}\geq m\}}\mathbbm{1}_{\{T_{lj}^m\in B\}}\Big]\nonumber\\
& \leq \sum_{n=1}^\infty\sum_{m=1}^\infty \E\big[\mathbbm{1}_{\{Z_{ki}\geq n\}}\big]^{1/p} \: \E \big[\mathbbm{1}_{\{Z_{lj}\geq m\}}\big]^{1/q} \: \E\big[\mathbbm{1}_{\{T_{lj}^m\in B\}}\big]^{1/r}\nonumber\\
& = \sum_{n=1}^\infty\sum_{m=1}^\infty \P(Z_{ki}\geq n)^{1/p} \: \P(Z_{lj}\geq m)^{1/q} \: \P(T_{lj}^m\in B)^{1/r}.\label{eqn:holder}
\end{align}
where the second line follows from H\"older's inequality with any $p, q, r \ge 1$, $\frac{1}{p}+\frac{1}{q}+\frac{1}{r} = 1$. 

\begin{remark}
	Note that the use of H\"older's inequality and the need for an exponential bound on $\P(Z_{ki}\geq n)$ and $\P(Z_{lj}\geq m)$ differ from the univariate case (see \citet{Cheysson2022}), for which the times of arrival in a generation $k$ and the number of points in a generation $l$ are independent, for all $k$ and $l$.
	This is not the case in the multivariate setup, since the distribution of an arrival time may depend on its genealogy, through the types of its ancestors.
\end{remark}

We can establish a simple upper bound on $\P(T_{lj}^m\in B)$ using Markov's inequality.
\begin{lemma}
\label{lem:markov_T}
	Let $\beta > 0$ and assume that 
	\begin{equation*}
	\nu_{1+\beta} = \sup_{1 \le i,j \le d} \int_{\mathbb R} t^{1+\beta} h_{ij}^\ast(t) \mathrm dt < \infty,
	\end{equation*}
	with $h_{ij}^\ast = h_{ij} / \lVert h_{ij} \rVert_{L^1}$. Then,
	\begin{equation}
	\label{eqn:markov_T}
	\P(T_{lj}^m \in B) \le \frac{l^{1+\beta}\nu_{1+\beta}}{(t + \tau)^{1+\beta}}.
	\end{equation}
\end{lemma}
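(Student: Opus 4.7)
The plan is to reduce the probability to a moment bound via Markov's inequality, and then control the $(1+\beta)$-th moment of $T_{lj}^m$ by decomposing it along its ancestry.

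First I would observe that since $B \subset (t+\tau, v]$, we have $\P(T_{lj}^m \in B) \le \P(T_{lj}^m > t+\tau)$. Since $(T_{lj}^m)^{1+\beta}$ is non-negative, Markov's inequality yields
\[
\P(T_{lj}^m > t+\tau) \le \frac{\E[(T_{lj}^m)^{1+\beta}]}{(t+\tau)^{1+\beta}},
\]
so the problem reduces to proving $\E[(T_{lj}^m)^{1+\beta}] \le l^{1+\beta}\nu_{1+\beta}$.

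Next I would use the branching representation of the cluster. Conditionally on the event $\{Z_{lj} \ge m\}$ and on the genealogy of the individual labelled $T_{lj}^m$ --- namely the ancestor types $i_0, i_1, \ldots, i_l = j$ along the path from the immigrant at time $0$ to that individual --- the arrival time decomposes as
\[
T_{lj}^m = \sum_{k=1}^{l} D_k,
\]
where the delays $D_k$ are conditionally independent and $D_k$ has density $h_{i_{k-1} i_k}^{\ast}$. Applying the power-mean (Jensen's) inequality to the convex function $x \mapsto x^{1+\beta}$ on $[0,\infty)$ gives
\[
(T_{lj}^m)^{1+\beta} \le l^{\beta} \sum_{k=1}^{l} D_k^{1+\beta}.
\]
Taking the conditional expectation, each term satisfies $\E[D_k^{1+\beta} \mid i_{k-1}, i_k] = \int_{\mathbb R} t^{1+\beta} h_{i_{k-1} i_k}^\ast(t) \dd t \le \nu_{1+\beta}$; summing over $k$ and taking the outer expectation (which marginalises out the ancestry) yields $\E[(T_{lj}^m)^{1+\beta}] \le l \cdot l^{\beta} \nu_{1+\beta}$, which is the desired bound.

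The only delicate point is the interpretation of $T_{lj}^m$ on the event $\{Z_{lj} < m\}$; this is harmless since the probability of interest automatically restricts to $\{Z_{lj} \ge m\}$ (consistent with its appearance inside \eqref{eqn:holder}), so one may set $T_{lj}^m = +\infty$ off this event without affecting either side of the inequality. Beyond this, the multivariate setting poses no additional obstacle once one conditions on the ancestry path, since the supremum defining $\nu_{1+\beta}$ dominates each $\int t^{1+\beta} h_{i_{k-1} i_k}^\ast(t) \dd t$ uniformly in the types encountered.
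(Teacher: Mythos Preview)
Your proof is correct and follows essentially the same route as the paper: decompose $T_{lj}^m$ as a sum of $l$ inter-arrival delays along its ancestry, use the convexity inequality $(\sum_{k=1}^l D_k)^{1+\beta}\le l^{\beta}\sum_{k=1}^l D_k^{1+\beta}$ (which the paper phrases as H\"older and you as Jensen, but they coincide here), bound each $\E[D_k^{1+\beta}]$ by $\nu_{1+\beta}$, and conclude via Markov. Your explicit conditioning on the ancestor types $i_0,\ldots,i_l$ is a welcome clarification of a point the paper leaves implicit, and your remark on the event $\{Z_{lj}<m\}$ is likewise a harmless refinement.
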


The following two lemmas form the heart of the proof, and make it distinct to the univariate case: in order to bound the right-hand side of \eqref{eqn:holder} with the fastest rate of convergence to zero, the term $\P(T_{lj}^m \in B)^{1/r} = \bigO*{\tau^{-(1+\beta)/r}}$ must converge as fast as possible, so that $r$ should be chosen as small as possible.
Consequently, in H\"older's inequality, $p$ and $q$ may be arbitrary large.
To ensure that the terms $\P(Z_{ki}\geq n)^{1/p}$ and $\P(Z_{lj}\geq m)^{1/q}$ are summable with respect to $n$ and $m$, we first prove an exponential inequality for the multitype Galton-Watson process $(Z_{ki})$, through its Laplace transform, yielding the following lemma.

\begin{lemma}
\label{lem:laplace_Z}
	Let $\mathbf u = (u_1, \ldots, u_d) \in \mathbb R^d$ with strictly positive entries. 
	Then,
	\begin{equation}
	\label{eqn:markov_Z}
	\sum_{n=1}^\infty \mathbb P(Z_{ki} \ge n)^{1/p} \le C_1 \bigl(\mathcal L\{\mathbf Z_k\}(\mathbf u) - 1\bigr)^{1/p},
	\end{equation}
	where $\mathcal L\{\mathbf Z_k\}(\mathbf u) = \E[\exp(\mathbf Z_k^\intercal \mathbf u)]$ denotes the Laplace transform of $\mathbf Z_k$ and $C_1$ denotes a constant that can be chosen uniformly with respect to $1 \le i \le d$.
\end{lemma}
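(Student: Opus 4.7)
The plan is to combine an exponential Markov inequality with a coordinate-wise comparison between $e^{u_i Z_{ki}}$ and $e^{\mathbf Z_k^\intercal \mathbf u}$. The key trick is to subtract $1$ before applying Markov, so that the bound features $\mathcal L\{\mathbf Z_k\}(\mathbf u) - 1$ rather than $\mathcal L\{\mathbf Z_k\}(\mathbf u)$ itself; this matters because the latter is always at least $1$ and therefore cannot decay in $k$, whereas the former does vanish under Assumption~\ref{ass:spectral_radius}, which is what the subsequent Lemma~\ref{lem:summability_Z} will exploit.

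Concretely, I would start from the identity $\{Z_{ki} \ge n\} = \{e^{u_i Z_{ki}} - 1 \ge e^{u_i n} - 1\}$ (valid since both sides of the inner inequality are non-negative for $n \ge 1$) and apply Markov's inequality to the non-negative random variable $e^{u_i Z_{ki}} - 1$, obtaining
\[
\P(Z_{ki} \ge n) \le \frac{\E\!\left[e^{u_i Z_{ki}}\right] - 1}{e^{u_i n} - 1}.
\]
Next, since every $u_j > 0$ and $Z_{kj} \ge 0$, one has pointwise $e^{u_i Z_{ki}} - 1 \le e^{\mathbf Z_k^\intercal \mathbf u} - 1$, so taking expectation yields $\E[e^{u_i Z_{ki}}] - 1 \le \mathcal L\{\mathbf Z_k\}(\mathbf u) - 1$.

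Raising to the power $1/p$ and summing in $n$ then gives
\[
\sum_{n=1}^\infty \P(Z_{ki} \ge n)^{1/p} \le \bigl(\mathcal L\{\mathbf Z_k\}(\mathbf u) - 1\bigr)^{1/p} \sum_{n=1}^\infty \bigl(e^{u_i n} - 1\bigr)^{-1/p},
\]
after which I would set $C_1 := \max_{1 \le i \le d} \sum_{n=1}^\infty (e^{u_i n} - 1)^{-1/p}$, finite because each series has terms decaying like $e^{-u_i n/p}$ and the maximum is over a finite index set. The only real subtlety is recognising that one must work with $e^{u_i Z_{ki}} - 1$ rather than $e^{u_i Z_{ki}}$: this is the modification of the univariate exponential bound of \citet{Cheysson2022} that accommodates the lack of independence between generation sizes and arrival times in the multitype Galton--Watson setting.
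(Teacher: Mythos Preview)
Your argument is correct and matches the paper's own proof essentially line for line: the paper also writes $\P(Z_{ki}\ge n)=\P(e^{u_i Z_{ki}}-1\ge e^{u_i n}-1)$, applies Markov's inequality to $e^{u_i Z_{ki}}-1$, bounds $\E e^{u_i Z_{ki}}-1$ above by $\mathcal L\{\mathbf Z_k\}(\mathbf u)-1$, and then sums the geometric-type series in $n$ to extract the constant $C_1$. The only difference is that the paper's Section~\ref{sec:proof_exp_ineq} additionally derives the recurrence $\mathcal L\{\mathbf Z_k\}(\mathbf u)=\mathcal L\{\mathbf Z_0\}(\mathbf g^k(\mathbf u))$ in the same place, but that identity is used for Lemma~\ref{lem:summability_Z} rather than for the inequality~\eqref{eqn:markov_Z} itself, so your proof of the stated lemma is complete as written.
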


\begin{remark}
	Throughout the rest of the proof, the $C_k$ are absolute constants.
\end{remark}

Plugging both \eqref{eqn:markov_T} and \eqref{eqn:markov_Z} into \eqref{eqn:holder} yields
\begin{align*}
\E[Z_{ki}(A) Z_{lj}(B)] 
&\leq \sum_{n=1}^\infty\sum_{m=1}^\infty \P(Z_{ki}\geq n)^{1/p} \: \P(Z_{lj}\geq m)^{1/q} \: \P(T_{lj}^m\in B)^{1/r}\\
&\leq C_2 \bigl(\mathcal L\{\mathbf Z_k\}(\mathbf u) - 1\bigr)^{1/p} \bigl(\mathcal L\{\mathbf Z_l\}(\mathbf u) - 1\bigr)^{1 / q} \left( \frac{l^{1+\beta}\nu_{1+\beta}}{(t+\tau)^{1+\beta}} \right)^{1 / r}.
\end{align*}
The same reasoning can be applied to $\E[Z_{ki}(A)] \E[Z_{lj}(B)]$, leading to an analogous H\"older inequality, so that we only detailed the proof for $\E[Z_{ki}(A) Z_{lj}(B)]$ which is the slightly more complicated case. Then, from \eqref{eqn:covariance_branching_process},
\begin{align}
&\left| \Cov\Big(N_i(A \mid 0), N_j (B \mid 0) \Big) \right|\nonumber\\
&\qquad\qquad \le \sum_{k=0}^\infty\sum_{l=0}^\infty C_2 \bigl(\mathcal L\{\mathbf Z_k\}(\mathbf u) - 1\bigr)^{1/p} \bigl(\mathcal L\{\mathbf Z_l\}(\mathbf u) - 1\bigr)^{1 / q} \left( \frac{l^{1+\beta}\nu_{1+\beta}}{(t+\tau)^{1+\beta}} \right)^{1 / r}.\label{eqn:covariance_branching_process_2}
\end{align}

We now focus on proving the summability of the series $\sum_k \bigl(\mathcal L\{\mathbf Z_k\}(\mathbf u) - 1\bigr)^{1/p}$.
The difficulty of this proof is algebraic in nature, as even though the spectral radius of the reproduction matrix $\mathbf M \coloneqq (\lVert h_{ij} \rVert_{L^1})_{1 \le i,j \le d}$ is strictly less than $1$ (by Assumption \ref{ass:spectral_radius}), it does not ensure that $\lVert \M \mathbf u \rVert_1 \le \lVert \mathbf u \lVert_1$ for all $\mathbf u$.
However, this holds asymptotically, such that for $n$ large, $\lVert \M^n \mathbf u \rVert_1 \le \lVert \mathbf u \rVert_1$.
Then, a recurrence on the Laplace transform of $\mathbf Z_k$ with a good choice for $\mathbf u$ yields the summability of the series.

\begin{lemma}
\label{lem:summability_Z}
	There exists $\mathbf u = (u_1, \ldots, u_d)$ with $u_j > 0$ for all $1 \le j \le d$ such that $\sum_k \bigl(\mathcal L\{\mathbf Z_k\}(\mathbf u) - 1\bigr)^{1/p}$ and $\sum_l  l^{(1+\beta) / r} \bigl(\mathcal L\{\mathbf Z_l\}(\mathbf u) - 1\bigr)^{1 / q}$ are summable.
\end{lemma}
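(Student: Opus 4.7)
The plan is to reduce the problem to studying the iterates of a nonlinear map that fixes the origin with Jacobian equal to the reproduction matrix $\mathbf M$, and then to leverage Assumption \ref{ass:spectral_radius}. First, conditioning on $\mathbf Z_k$ and using that a type-$j$ individual produces independent Poisson numbers (with means $M_{ji}$) of type-$i$ offspring, I would compute the Laplace transform of a single offspring vector and obtain the functional recursion
\begin{equation*}
\mathcal L\{\mathbf Z_{k+1}\}(\mathbf u) = \mathcal L\{\mathbf Z_k\}\bigl(\psi(\mathbf u)\bigr), \qquad \psi(\mathbf u) := \mathbf M \bigl(e^{u_1}-1,\ldots,e^{u_d}-1\bigr)^\intercal.
\end{equation*}
Iterating this recursion, and using that $\mathbf Z_0$ consists of a single immigrant of some type $i_0$, gives the closed form $\mathcal L\{\mathbf Z_k\}(\mathbf u) = \exp\bigl([\psi^{(k)}(\mathbf u)]_{i_0}\bigr)$, so the whole problem reduces to showing that the iterates $\psi^{(k)}(\mathbf u)$ decay geometrically in $k$ for some well-chosen $\mathbf u$ with strictly positive entries.

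To control the iterates, I would combine two ingredients: $\mathbf M$ has non-negative entries, and the elementary inequality $e^u - 1 \le (1+u)u$ valid for $u \in [0,1]$. Together they yield the componentwise bound $\psi(\mathbf u) \le (1 + \|\mathbf u\|_\infty)\, \mathbf M \mathbf u$ whenever $\|\mathbf u\|_\infty \le 1$. Following the hint in the excerpt, I would fix $n_0$ large enough so that $\|\mathbf M^{n_0}\|_1 \le \rho$ for some $\rho < 1$, which is possible by Gelfand's formula applied to Assumption \ref{ass:spectral_radius}. Iterating the componentwise inequality over a block of $n_0$ steps, and using that $\mathbf M$ preserves componentwise inequalities, yields
\begin{equation*}
\|\psi^{(n_0)}(\mathbf u)\|_1 \le \Bigl(\prod_{j=0}^{n_0-1}\bigl(1 + \|\psi^{(j)}(\mathbf u)\|_\infty\bigr)\Bigr)\, \|\mathbf M^{n_0} \mathbf u\|_1.
\end{equation*}
Choosing $\mathbf u$ with entries small enough to force the product of correction factors to be at most $\rho^{-1/2}$ produces a single-block contraction $\|\psi^{(n_0)}(\mathbf u)\|_1 \le \rho^{1/2} \|\mathbf u\|_1$. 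A short bootstrap will then show that the iterates remain small enough for the hypothesis to hold at each stage, so the contraction propagates: $\|\psi^{(mn_0)}(\mathbf u)\|_1 \le \rho^{m/2}\|\mathbf u\|_1$ for all $m \ge 0$, and by equivalence of norms $\|\psi^{(k)}(\mathbf u)\|_\infty \le C \tilde\rho^{k}$ for some $\tilde\rho \in (0,1)$.

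Given this geometric decay, the rest is cosmetic. Using $e^x - 1 \le 2x$ on $[0, \log 2]$ gives
\begin{equation*}
\mathcal L\{\mathbf Z_k\}(\mathbf u) - 1 = \exp\bigl([\psi^{(k)}(\mathbf u)]_{i_0}\bigr) - 1 \le C\, \tilde\rho^{k},
\end{equation*}
uniformly in $i_0$, so that $\sum_k (\mathcal L\{\mathbf Z_k\}(\mathbf u)-1)^{1/p}$ is bounded by a geometric series, and $\sum_l l^{(1+\beta)/r}(\mathcal L\{\mathbf Z_l\}(\mathbf u)-1)^{1/q}$ by a polynomial times a geometric series; both are summable. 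The main obstacle will be the second step, specifically propagating the block-wise contraction through the bootstrap: one must check that the nonlinear correction $\psi(\mathbf u) - \mathbf M \mathbf u$ stays negligible uniformly over $n_0$ consecutive iterations at every stage, which is where both the smallness of the chosen $\mathbf u$ and the non-negativity of $\mathbf M$ (preserving componentwise inequalities under iteration) are used together.
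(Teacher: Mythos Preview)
Your proposal is correct and follows essentially the same route as the paper: derive the recursion $\mathcal L\{\mathbf Z_k\}(\mathbf u)=\mathcal L\{\mathbf Z_0\}(\psi^{(k)}(\mathbf u))$ (the paper writes $\psi=\mathbf g$), show geometric decay of the iterates $\psi^{(k)}(\mathbf u)$ using Assumption~\ref{ass:spectral_radius} and the non-negativity of $\mathbf M$, and conclude via $\mathbf Z_0$ being a unit vector. The only difference is in the linearization step: instead of your $u$-dependent slope $e^u-1\le(1+u)u$ and the block/bootstrap argument it forces, the paper fixes $\delta\in(1,\rho^{-1})$, uses $e^u-1\le\delta u$ on $[0,u_0]$, and chooses $\mathbf u$ so small that the \emph{linear} iterates $\delta^k\mathbf M^k\mathbf u$ all stay in $[0,u_0]^d$ (possible since $\sup_k\lVert\delta^k\mathbf M^k\rVert<\infty$); then $\mathbf g^k(\mathbf u)\le\delta^k\mathbf M^k\mathbf u$ follows by a one-line induction, which dissolves precisely the bootstrap obstacle you flag.
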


The inequality in \eqref{eqn:covariance_branching_process_2} can then be simplified to
\begin{equation}
\label{eqn:last_bound}
\left| \Cov\Big(N_i(A \mid 0), N_j (B \mid 0) \Big) \right| \le C_3 \left( \frac{\nu_{1+\beta}}{(t+\tau)^{1+\beta}} \right)^{1 / r}.
\end{equation}

Finally, choose $r = (1+\beta) / (1+\gamma)$ and $p = q = 2\,(1+\beta)/(\beta-\gamma)$, such that $(1+\beta)/r = 1 + \gamma > 1$.
Then, plugging \eqref{eqn:last_bound} into \eqref{eqn:conditioning} yields
\begin{align*}
\left| \Cov\Big(\mathbbm{1}_\mathcal{A}(\N), \1_\mathcal{B}(\N)\Big) \right| 
&\leq \sum_{i=1}^{d} \sum_{j=1}^{d} \sum_{k=1}^{d}\int \left| \Cov\Big(N_i(A \mid y), N_j(B \mid y) \Big) \right| \eta_k \mathrm dy\\
&\leq C_4 d^3 \nu_{1+\beta}^{1/r} \int_{-\infty}^t \frac 1 {(t+\tau-y)^{1+\gamma}} \sup_{1 \le k \le d} \{\eta_k\} \  \mathrm dy\\
&\le C_5 \tau^{-\gamma}.
\end{align*}
which is the desired bound.

\subsection{Proof of Proposition \ref{prop:poinas}}

The layout of this proof is identical to that of \citet{Poinas2019}, but for clarity we detail it within the multivariate setup.
Consider a positively associated multivariate point process $\N$.
We consider the functions $f_+, f_- : \mathfrak N\to\R$, $\mathcal{E}(A)$-measurable, and $g_+, g_- : \mathfrak N\to \R$, $\mathcal{E}(B)$-measurable, defined by 
\[	\begin{cases}
f_{\pm} (\N) = f(\N_{|A}) \pm \left\|f\right\|_A \lVert\N(A)\rVert_1,\\
g_{\pm} (\N) = g(\N_{|B}) \pm \left\|g\right\|_B \lVert\N(B)\rVert_1.
\end{cases} \]
For all $1 \le i \le d$ and $x\in A$,
\begin{equation*}
f_+(\N + \bm\delta_{ix}) - f_+(\N) = f(\N_{|A} + \bm\delta_{ix}) - f(\N_{|A})   + \left\|f\right\|_A
\end{equation*}
which is positive by the definition of $\left\|f\right\|_A$. Therefore, $f_+$ is an increasing function respective to the partial order defined by $\mathbf N \le \mathbf M$ if and only if for all $1 \le i \le d$,
\begin{equation*}
\forall A \subset \mathbb R, \quad N_i(A) \le M_i(A).
\end{equation*}
With the same reasoning, $g_+$ is also increasing, and $f_-$, $g_-$ are decreasing.
While $f_+$ is not bounded, it is non-negative and almost surely finite so that by a limiting argument, using the sequence of functions $\min(f_+, k)$ when $k$ goes to infinity, the association inequality can be applied to $f_+$.
The other functions can be treated the same way, which leads to the inequalities
\begin{equation*}
\Cov \big(f_{+}(\N),g_{+}(\N)\big)\geq 0 \hspace{0.3cm}\text{and} \hspace{0.3cm}\Cov \big(f_{-}(\N),g_{-}(\N)\big)\geq 0.
\end{equation*}
We can then expand these expressions
\begin{multline*}
\Cov \big(f_+(\N),g_+(\N)\big) = \Cov \big(f(\N_{|A}), g(\N_{|B})\big) + \left\|f\right\|_A\left\|g\right\|_B  \Cov\big(\lVert\N(A)\rVert_1,\lVert\N(B)\rVert_1\big)\\
+ \left\|g\right\|_B \Cov \big(f(\N_{|A}), \lVert\N(B)\rVert_1\big) + \left\|f\right\|_A \Cov \big(\lVert\N(A)\rVert_1,g(\N_{|B})\big) 
\end{multline*}
and 
\begin{multline*}
\Cov \big(f_-(\N),g_-(\N)\big) = \Cov \big(f(\N_{|A}), g(\N_{|B})\big) + \left\|f\right\|_A\left\|g\right\|_B  \Cov\big(\lVert\N(A)\rVert_1),\lVert\N(B)\rVert_1\big)\\ 
-  \left\|g\right\|_B \Cov \big(f(\N_{|A}), \lVert\N(B)\rVert_1\big) - \left\|f\right\|_A \Cov \big(\lVert\N(A)\rVert_1,g(\N_{|B})\big).
\end{multline*}
Adding these two expressions together yields the lower bound:
\begin{equation}\label{equa:1}
\Cov \big(f(\N_{|A}), g(\N_{|B})\big) \geq - \left\|f\right\|_A\left\|g\right\|_B \Cov\big(\lVert\N(A)\rVert_1,\lVert\N(B)\rVert_1\big).
\end{equation} 
The upper bound is obtained similarly by replacing $f$ by $-f$ in the previous expression.

\subsection{Proof of Lemma \ref{lem:conditioning}}

Using Proposition \ref{prop:poinas} with $f = \mathbbm{1}_\mathcal{A}$ and $g = \mathbbm{1}_\mathcal{B}$, we have 
\begin{align*}
\left| \Cov\Big(\mathbbm{1}_\mathcal{A}(\N), \mathbbm{1}_\mathcal{B}(\N)\Big)\right| &\leq  \left|\Cov\Big(\lVert\N(A)\rVert_1, \lVert\N(B)\rVert_1\Big)\right|\\
&\le \sum_{i=1}^d \sum_{j=1}^d \left|\Cov\Big(N_i(A), N_j(B)\Big)\right|.
\end{align*}
Then, denoting by $M_k^c$ the first-order moment measure of the $k$-th component $N_k^c$ of the cluster centre process, and by $C_{kl}^c$ the covariance measure between the $k$-th and $l$-th components of the cluster centre process ($1 \le k, l \le d$), we have, by conditioning on the cluster centre process (see for example \citealp[Exercise 6.3.4]{Daley2003}),
\begin{multline*}
\Cov\Big(N_i(A), N_j(B)\Big)= \sum_{k=1}^{d}\int \Cov \Big(N_i(A \mid y),N_j(B \mid y)\Big)M_k^c(dy)\\
+ \sum_{k=1}^{d}\sum_{l=1}^{d}\int \int \E(N_i(A) \mid x) \E(N_j(B) \mid y)C_{kl}^c(\mathrm dx\times \mathrm dy).
\end{multline*}
Since the centre process is Poisson, $C_{kl}^c \equiv 0$ 
and the second term vanishes.      

\subsection{Proof of Lemma \ref{lem:markov_T}: an upper bound for $\P(T_{lj}^m \in B)$}
Consider the inter-arrival times between a parent and its offsprings.
Since an individual $T_{ki}^n$ of generation $k$ and type $i$ generates offsprings of generation $k+1$ and type $j$ according to an inhomogeneous Poisson process with intensity $h_{ij}(\cdot - T_{ki}^n)$, these inter-arrival times are independent of each other, and their distributions only depend on the type $i$ of the parent and the type $j$ of the offspring.
Consequently, any single arrival time $T_{lj}^m$ can be written as the sum of $l$ independent inter-arrival times,
\begin{equation*}
T_{lj}^m = \sum_{\iota = 1}^l \Delta_\iota(T_{lj}^m),
\end{equation*}
where the density functions of the $\Delta_\iota(T_{lj}^m)$ are amongst the $(h_{ij}^\ast)_{i,j}$.

Hölder's inequality then gives,
\begin{equation*}
T_{lj}^m = \sum_{\iota = 1}^l \Delta_\iota(T_{lj}^m) \le \left( \sum_{\iota = 1}^l 1 \right)^{\beta / (1 + \beta)} \times \left( \sum_{\iota = 1}^l \left( \Delta_\iota(T_{lj}^m) \right)^{1+\beta} \right)^{1 / (1+\beta)},
\end{equation*}
and by independence of the $\Delta_\iota(T_{lj}^m)$,
\begin{equation*}
\mathbb E \left[ \sum_{\iota = 1}^l \left( \Delta_\iota(T_{lj}^m) \right)^{1+\beta} \right] \le l \, \sup_{1 \le \iota \le l} \mathbb E \left[ \left( \Delta_\iota(T_{lj}^m) \right)^{1+\beta} \right] \le l \, \nu_{1+\beta}.
\end{equation*}
Hence,
\begin{equation*}
\mathbb E\left[ \left( T_{lj}^m \right)^{1+\beta}\right] \le l^{1+\beta} \nu_{1+\beta}.
\end{equation*}

Finally, we can apply Markov's inequality to $\P(T_{lj}^m \in B)$, recalling that $B = (t+\tau, v]$:
\begin{equation*}
\P(T_{lj}^m \in B) \le \P(T_{lj}^m > t + \tau) \le \frac{l^{1+\beta}\nu_{1+\beta}}{(t + \tau)^{1+\beta}}.
\end{equation*}

\subsection{Proof of Lemma \ref{lem:laplace_Z}: exponential inequality for $\P(Z_{ki}\geq n)$ and $\P(Z_{lj}\geq m)$}\label{sec:proof_exp_ineq}
First recall that the Laplace transform of a Poisson variable $X$ with intensity $\lambda$ is given by
\begin{equation}
\label{eqn:laplace}
\mathcal L\{X\}(u) = \mathbb E[e^{uX}] = e^{\lambda(e^u - 1)}.
\end{equation}
For a $\mathbb N$-valued random variable $Y$ and a constant $\alpha \ge 0$, define the reproduction operator $\circ$ by
\begin{equation*}
\alpha \circ Y = \sum_{n=1}^Y \xi_n^{(\alpha)},
\end{equation*}
where $\xi_n^{(\alpha)} \overset{\text{i.i.d.}}{\sim} \mathcal P(\alpha)$, independently of $Y$.

Denoting by $\mathbf Z_k = (Z_{k1}, \ldots, Z_{kd})^\intercal$ the random vector for the number of points of generation $k$ for each component of the branching process, we have that conditionally on $\mathbf Z_{k-1}$, the variables $Z_{kj}$ are Poisson with intensity $(\mathbf Z_{k-1}^\intercal \mathbf M)_j = \sum_{i=1}^d Z_{k-1,i} M_{ij}$ ($k > 0$, $1 \le j \le d$).
More specifically, we have that
\begin{equation*}
Z_{kj} = \sum_{i=1}^d M_{ij} \circ Z_{k-1,i} = \sum_{i=1}^d \sum_{n = 1}^{Z_{k-1,i}} \xi_n^{(M_{ij})}
\end{equation*}
where $\xi_n^{(M_{ij})} \overset{\text{i.i.d.}}{\sim} \mathcal P(M_{ij})$ independently of all $Z_{k-1,i}$.

For all $\mathbf{u}= (u_1, \cdots,u_d) \in \R^d$, consider now the Laplace transform of $\Z_k$,
\begin{align}
\mathcal L\{\mathbf Z_k\}(\mathbf u) = \E\left[\exp(\Z_k^\intercal \mathbf{u})\right] 
&= \E\left[\exp\left(\sum_{j=1}^{d} u_j Z_{kj}\right)\right]\nonumber \\ 
&= \E\left[\exp\left(\sum_{j=1}^{d} \sum_{i=1}^{d} u_j M_{ij} \circ Z_{k-1,i}\right)\right]\nonumber \\ 
&= \E\left[\exp\left(\sum_{j=1}^{d} \sum_{i=1}^{d} \sum_{n=1}^{Z_{k-1,i}} u_j \xi_n^{(M_{ij})}\right)\right] \nonumber\\
&= \E\left[ \prod_{j=1}^{d} \prod_{i=1}^{d} \prod_{n=1}^{Z_{k-1,i}} \E \left[ \exp \left(u_j \xi_n^{(M_{ij})}\right) \,\middle\vert\, \mathbf Z_{k-1} \right] \right] \nonumber\\
&= \E\left[ \prod_{j=1}^{d} \prod_{i=1}^{d} \prod_{n=1}^{Z_{k-1,i}} \exp \left( M_{ij} (e^{u_j} - 1) \right) \right] \label{eqn:laplace_used}\\
&= \E\left[ \exp \left( \sum_{j=1}^{d} \sum_{i=1}^{d} Z_{k-1,i} M_{ij} (e^{u_j} - 1) \right) \right] \nonumber\\
&= \E\left[ \exp \left( \mathbf Z_{k-1}^\intercal \mathbf M (\mathbf e^\mathbf u - \mathbf 1) \right) \right], \nonumber
\end{align}
where \eqref{eqn:laplace_used} was obtained using \eqref{eqn:laplace} and the fact that $\xi_n^{(M_{ij})}$ and $\mathbf Z_{k-1}$ are independent, and $\mathbf e^{\mathbf u}$ denotes the component-wise exponentiation of $\mathbf u$.

Define the function $\mathbf g$ by $\mathbf g(\mathbf u) = \mathbf M(\mathbf e^{\mathbf u} - \mathbf 1)$.
By recurrence, we therefore have
\begin{equation}
\label{eqn:laplace_recurrence}
\mathcal L\{\mathbf Z_k\}(\mathbf u) = \mathcal L\{\mathbf Z_{k-1}\}(\mathbf g(\mathbf u)) = \ldots = \mathcal L\{\mathbf Z_0\}(\mathbf g^k(\mathbf u)),
\end{equation}
where $\mathbf g^k$ stands for the $k$-fold composition of $\mathbf g$ with itself.

Finally, we can apply Markov's inequality to $Z_{ki}$ for any $1 \le i \le d$,
\begin{equation*}
\mathbb P(Z_{ki} \ge n) = \mathbb P(e^{u_i Z_{ki}} - 1 \ge e^{u_i n} - 1) \le \frac{\mathbb E e^{u_i Z_{ki}} - 1}{e^{u_i n} - 1} \le \frac{\mathcal L\{\mathbf Z_k\}(\mathbf u) - 1}{e^{u_i n} - 1},
\end{equation*}
where $\mathbf u = (u_1, \ldots, u_d)$ was chosen with strictly positive entries. 
Then, as $\sum_n (e^{u_i n} - 1)^{-1/p}$ is summable, $\sum_n \mathbb P(Z_{ki} \ge n)^{1/p}$ is also summable and
\begin{equation*}
\sum_{n=1}^\infty \mathbb P(Z_{ki} \ge n)^{1/p} \le C_1 \left(\mathcal L\{\mathbf Z_k\}(\mathbf u) - 1\right)^{1/p},
\end{equation*}
where $C_1$ is an absolute constant, which can be chosen uniformly with respect to $1 \le i \le d$.

\subsection{Proof of Lemma \ref{lem:summability_Z}: summability of $\sum_k (\mathcal L\{\mathbf Z_k\}(\mathbf u) - 1)^{1/p}$}\label{sec:proof_summability}
Define $\delta$ such that $1 < \delta < \rho^{-1}$, where $\rho = \mathrm{Sp}(\mathbf M) < 1$ denotes the spectral radius of the reproduction matrix $\mathbf M$.
Then there is a $u_0 > 0$ such that, for all $u \in [0, u_0]$, $e^u - 1 \le \delta u$.
Applying this to the vector $\mathbf u = (u_1, \ldots, u_d)$ with $u_j \in [0, u_0]$ for all $1 \le j \le d$, we find that
\begin{equation}
\label{eqn:recurrence_relation}
\mathbf g(\mathbf u) = \mathbf M(\mathbf e^{\mathbf u} - \mathbf 1) \le \delta \mathbf M \mathbf u,
\end{equation}
where the inequality holds term by term.

\begin{lemma}
	\label{lem:g_bound}
	There exists $\mathbf u = (u_1, \ldots, u_d)$ with $u_j > 0$ for all $1 \le j \le d$ such that, for all $k \in \mathbb N$, 
	\begin{equation*}
	\mathbf g^k(\mathbf u) \le \delta^k \mathbf M^k \mathbf u.
	\end{equation*}
	Furthermore, there exists a constant $0 < c < 1$ and an integer $k_0 > 0$ such that, for all $k \ge k_0$, 
	\begin{equation}
	\label{eqn:g_convergence}
	\lVert \mathbf g^k(\mathbf u) \rVert_1 \le c^k \, \lVert \mathbf u \rVert_1.
	\end{equation}
\end{lemma}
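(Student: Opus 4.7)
The plan is to combine three ingredients: the local linearization \eqref{eqn:recurrence_relation}, the componentwise monotonicity of $\mathbf g$ (which follows since $\mathbf M$ has nonnegative entries and $x \mapsto e^x - 1$ is increasing on $\mathbb R_+$), and the subcritical spectral radius $\rho(\delta \mathbf M) = \delta \rho < 1$. The main difficulty is that \eqref{eqn:recurrence_relation} is only valid in the region $\{\mathbf v : \lVert \mathbf v \rVert_\infty \le u_0\}$, so I must choose $\mathbf u$ small enough that every iterate $\mathbf g^k(\mathbf u)$ remains in this region. The key preliminary observation is that since $(\delta \mathbf M)^k \to 0$, the constant
\[
C := \sup_{k \ge 0} \lVert (\delta \mathbf M)^k \rVert_\infty
\]
is finite (where $\lVert \cdot \rVert_\infty$ denotes the matrix norm induced by the supremum norm on $\mathbb R^d$). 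I then fix $\mathbf u$ with $0 < u_j \le u_0/C$ for every $j$, which guarantees $\sup_{k \ge 0} \lVert \delta^k \mathbf M^k \mathbf u \rVert_\infty \le u_0$.

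With this choice, I prove the first inequality by induction on $k$. The base case $k = 0$ is trivial. For the inductive step, monotonicity of $\mathbf g$ combined with the hypothesis $\mathbf g^k(\mathbf u) \le \delta^k \mathbf M^k \mathbf u$ gives $\mathbf g^{k+1}(\mathbf u) \le \mathbf g(\delta^k \mathbf M^k \mathbf u)$; since $\lVert \delta^k \mathbf M^k \mathbf u \rVert_\infty \le u_0$, the local bound \eqref{eqn:recurrence_relation} applies to yield $\mathbf g(\delta^k \mathbf M^k \mathbf u) \le \delta \mathbf M \cdot \delta^k \mathbf M^k \mathbf u = \delta^{k+1} \mathbf M^{k+1} \mathbf u$.

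For the second inequality, since $\mathbf g^k(\mathbf u) \ge 0$ componentwise, taking $\ell^1$ norms in the first inequality yields $\lVert \mathbf g^k(\mathbf u) \rVert_1 \le \lVert (\delta \mathbf M)^k \mathbf u \rVert_1 \le \lVert (\delta \mathbf M)^k \rVert_1 \lVert \mathbf u \rVert_1$, where $\lVert (\delta \mathbf M)^k \rVert_1$ is the operator norm induced by $\lVert \cdot \rVert_1$ on $\mathbb R^d$. Gelfand's formula then gives $\lVert (\delta \mathbf M)^k \rVert_1^{1/k} \to \delta \rho < 1$, so for any $c \in (\delta \rho, 1)$ (nonempty by the choice $\delta < 1/\rho$), there exists $k_0$ such that $\lVert (\delta \mathbf M)^k \rVert_1 \le c^k$ for all $k \ge k_0$, closing the argument. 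The hard part is really the first claim: the subcritical spectral radius assumption is essential, as it simultaneously provides the global boundedness of $(\delta \mathbf M)^k$ needed to keep the iterates inside the linearization regime, and the eventual contraction needed for the second claim.
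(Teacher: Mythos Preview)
Your proof is correct and follows essentially the same approach as the paper's: use Gelfand's formula to show that $\sup_k \lVert (\delta \mathbf M)^k \mathbf u \rVert_\infty$ is finite, choose $\mathbf u$ small enough that this supremum is at most $u_0$, and then induct via \eqref{eqn:recurrence_relation}. Your presentation is in fact slightly more explicit than the paper's in two respects: you spell out the monotonicity of $\mathbf g$ needed for the inductive step (the paper just calls it ``a straightforward recurrence''), and you give a concrete choice $u_j \le u_0/C$ with $C = \sup_{k \ge 0} \lVert (\delta \mathbf M)^k \rVert_\infty$, whereas the paper argues existence of such a $\mathbf u$ via continuity and monotonicity of $\mathbf u \mapsto \sup_k \lVert \delta^k \mathbf M^k \mathbf u \rVert_\infty$.
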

\begin{proof}
	First note that, since $\rho < 1$, then for any matrix norm $\Vert \cdot \rVert$,
	\begin{equation*}
	\lim_{n \to \infty} \lVert \mathbf M^n \rVert^{1/n} = \rho.
	\end{equation*}
	Define $\varepsilon = 1/2\, (\delta^{-1} - \rho) > 0$ such that $c_\varepsilon = \delta (\rho + \varepsilon) < 1$.
	Then there exists $k_0 > 0$ such that, for all $k \ge k_0$,
	\begin{equation*}
	\lVert \mathbf M^k \rVert_1 \le (\rho + \varepsilon)^k.
	\end{equation*}
	Therefore, for all $k \ge k_0$,
	\begin{equation*}
	\lVert \delta^k \mathbf M^k \mathbf u \rVert_1 \le \delta^k (\rho + \varepsilon)^k \lVert \mathbf u \rVert_1 = c_\varepsilon^k \, \lVert \mathbf u \rVert_1 \xrightarrow[k \to \infty]{} 0.
	\end{equation*}
	Since $\lVert \delta^k \mathbf M^k \mathbf u \rVert_\infty \le \lVert \delta^k \mathbf M^k \mathbf u \rVert_1$, the sequence $(\lVert \delta^k \mathbf M^k \mathbf u \rVert_\infty)_{k \in \mathbb N}$ is bounded for any $\mathbf u$.
	Choose $\mathbf u$ with strictly positive entries such that $\lVert \delta^k \mathbf M^k \mathbf u \rVert_\infty \le u_0$ for all $k\in \mathbb N$ (such a $\mathbf u$ exists since the function $\mathbf u \mapsto \sup_{k \in \mathbb N} \lVert \delta^k \mathbf M^k \mathbf u \rVert_\infty$ is continuous and non-decreasing).
	Then, a straightforward recurrence from \eqref{eqn:recurrence_relation} yields the result.
\end{proof}

Plugging \eqref{eqn:g_convergence} into the recurrence relation for the Laplace transform of $\mathbf Z_k$, \eqref{eqn:laplace_recurrence}, for a $\mathbf u$, $c$ and $k_0$ defined as in Lemma \ref{lem:g_bound} and $k \ge k_0$, we find that
\begin{equation}
\label{eqn:laplace_to_gk}
\mathcal L\{\mathbf Z_k\}(\mathbf u) = \mathbb E[\exp(\mathbf Z_0^\intercal \mathbf g^k(\mathbf u))] \le \exp \big(\, \lVert \mathbf g^k(\mathbf u) \rVert_1\big) \le \exp \big(c^k \lVert \mathbf u \rVert_1\big),
\end{equation}
noting for the first inequality that for the branching process, $\mathbf Z_0$ is a vector with only one non-zero component, equal to one.
Therefore, when $k \ge k_0$, we have that
\begin{equation*}
\lvert\mathcal L\{\mathbf Z_k\}(\mathbf u) - 1\rvert \le \lvert \exp \big(c^k \lVert \mathbf u \rVert_1\big) - 1 \rvert \underset{k \to \infty}{=} c^k \lVert \mathbf u \rVert_1 + o(c^k).
\end{equation*}
Hence, $\sum_k \big(\mathcal L\{\mathbf Z_k\}(\mathbf u) - 1\big)^{1 / p}$ is summable.
Similarly, the series $\sum_l  l^{(1+\beta) / r} \big(\mathcal L\{\mathbf Z_l\}(\mathbf u) - 1\big)^{1 / q}$ is summable.

\section{Proof of Proposition \ref{prop:fclt}}\label{sec:proof_fclt}


Let us first write the functional central limit theorem proven in \citet{Merlevede2020} in the context of \citeauthor{Rosenblatt1956}'s traditional $\alpha$-mixing coefficient \citep{Rosenblatt1956}, for non-triangular sequences.
Consider a sequence $\{X_k\}_{k \ge 1}$ of centered ($\mathbb E[X_k] = 0$) variables, with partial sums $S_n = \sum_{k=1}^n X_k$.
Let $\sigma_n^2 = \Var S_n$.
For $0 \le t \le 1$, define 
\begin{equation*}
v_n(t) = \inf \left\{ m: 1 \le m \le n, \sigma_m^2 / \sigma_n^2 \ge t\right\} \quad \text{and} \quad W_n(t) = \sum_{k=1}^{v_n(t)} X_k.
\end{equation*}
Consider, for $\delta > 0$, the conditions
\begin{equation}\label{eqn:merlevede_cond2}
\sup_{n\ge 1}\, \frac{1}{\sigma_n^2} \sum_{k=1}^n E[X_k^2] < \infty, \quad \text{and} \quad \frac{1}{\sigma_n^{2+\delta}} \max_{1 \le k \le n} \mathbb E[ \lvert X_k \rvert^{2+\delta} ] \to 0,
\end{equation}
and 
\begin{equation}\label{eqn:merlevede_cond3}
\sup_{n \ge 1}\, \frac{1}{\sigma_n^2} \sum_{k = 1}^n \lVert X_k \rVert_{2+\delta}^2 < \infty \quad \text{and} \quad \sum_{\tau \ge 1} \tau^{2/\delta} \alpha(\tau) < \infty.
\end{equation}
\begin{theorem}[\citealp{Merlevede2020}]\label{thm:merlevede_fclt}
	Assume that conditions \eqref{eqn:merlevede_cond2} and \eqref{eqn:merlevede_cond3} are satisfied for some $\delta > 0$.
	Then, $\{\sigma_n^{-1} W_n(u), u \in (0,1]\}$ converges in distribution in $D([0,1])$ (equipped with the uniform topology) to the standard Brownian motion.
\end{theorem}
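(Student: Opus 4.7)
The proof is the classical two-step argument for functional CLTs: (i) convergence of finite-dimensional distributions, and (ii) tightness in $D([0,1])$ under the uniform topology. Since Brownian motion has continuous sample paths, weak convergence in the uniform topology follows from these two ingredients by a standard criterion (e.g.\ Billingsley).

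For step (i), I would apply Bernstein's big-block/small-block decomposition. Fixing $0 < t_1 < \ldots < t_m \le 1$ and focusing on a single increment $\sigma_n^{-1}(W_n(t_{i+1}) - W_n(t_i)) = \sigma_n^{-1} \sum_{k = v_n(t_i)+1}^{v_n(t_{i+1})} X_k$, I would partition the index range into alternating large blocks of size $p_n$ and small blocks of size $q_n$ with $q_n/p_n \to 0$ and $p_n\, \alpha(q_n) \to 0$. The $\alpha$-mixing covariance inequality (or its Rio $L^p$-version under \eqref{eqn:merlevede_cond3}) shows both that small blocks contribute negligibly in $L^2$ and that the large blocks can be coupled with an independent sequence with the same marginal laws at a vanishing cost. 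On the resulting independent approximation, the Lindeberg condition follows from the second half of \eqref{eqn:merlevede_cond2}, while the variance normalisation is controlled by the first half of \eqref{eqn:merlevede_cond2} together with the definition of $v_n$. Lindeberg--Feller then yields a centered Gaussian limit with the prescribed variance, and the Cramér--Wold device upgrades this to joint convergence with independent Gaussian increments of variances $t_{i+1} - t_i$.

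For step (ii), my plan is to invoke a Rosenthal-type moment inequality for $\alpha$-mixing sequences \citep[see e.g.][]{Doukhan1994}: under \eqref{eqn:merlevede_cond3} there exists $C_\delta > 0$ such that, for any finite set of indices $I$,
\begin{equation*}
\Bigl\lVert \sum_{k \in I} X_k \Bigr\rVert_{2+\delta}^2 \le C_\delta \sum_{k \in I} \lVert X_k \rVert_{2+\delta}^2 .
\end{equation*}
Applied to $I = \{v_n(s)+1, \ldots, v_n(t)\}$ and combined once more with \eqref{eqn:merlevede_cond3}, this yields the Kolmogorov-type moment condition
\begin{equation*}
\E\Bigl[\, \bigl\lvert \sigma_n^{-1}\bigl(W_n(t)-W_n(s)\bigr)\bigr\rvert^{2+\delta}\,\Bigr] \le C\,\lvert t-s\rvert^{1+\delta/2},
\end{equation*}
with exponent strictly greater than $1$, which is the classical sufficient condition for tightness of $\sigma_n^{-1} W_n$ in $D([0,1])$ endowed with the uniform topology.

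The main technical obstacle is to handle the non-stationary nature of $\{X_k\}$ cleanly: the parametrisation through $v_n$ must be such that an increment of the clock $u \mapsto v_n(u)$ of length $t-s$ translates into a variance increment close to $(t-s)\,\sigma_n^2$, uniformly in $n$ and in $s,t$, so that the Rosenthal bound translates into an $\lvert t-s\rvert^{1+\delta/2}$ upper bound in the Kolmogorov criterion. This is precisely what the first half of \eqref{eqn:merlevede_cond2} ensures, by preventing any single $\E[X_k^2]$ from carrying a non-negligible fraction of $\sigma_n^2$, in combination with the strict-quantile definition of $v_n$. The secondary obstacle, already present in the stationary case, is to control the coupling error between the large blocks with an $\alpha$-mixing bound that is compatible with the $L^{2+\delta}$-moments imposed in \eqref{eqn:merlevede_cond3}; the constraint $\sum_\tau \tau^{2/\delta} \alpha(\tau) < \infty$ is exactly tuned so that both the Rosenthal bound and the coupling step go through simultaneously.
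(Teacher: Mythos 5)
First, a point of reference: the paper does not prove Theorem \ref{thm:merlevede_fclt}. It is imported from \citet[Corollary 2.2]{Merlevede2020}, merely restated for non-triangular sequences and the classical $\alpha$-mixing coefficient, and the only thing proven in Section \ref{sec:proof_fclt} is Proposition \ref{prop:fclt}, by verifying conditions \eqref{eqn:merlevede_cond2} and \eqref{eqn:merlevede_cond3}. So your proposal cannot be checked against an in-paper argument; it must stand on its own as a proof of the cited result, and in any case it takes a different route (Bernstein blocking plus a Kolmogorov moment criterion) from the one in \citet{Merlevede2020}, which rests on maximal Rosenthal-type inequalities obtained by dyadic induction together with Rio's covariance inequality.

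On its own terms, your sketch has a genuine gap in the tightness step. The bound $\E[\lvert\sigma_n^{-1}(W_n(t)-W_n(s))\rvert^{2+\delta}] \le C\lvert t-s\rvert^{1+\delta/2}$ would require $\sum_{k=v_n(s)+1}^{v_n(t)}\lVert X_k\rVert_{2+\delta}^2 \le C\lvert t-s\rvert\,\sigma_n^2$, but the hypotheses do not deliver this: condition \eqref{eqn:merlevede_cond3} bounds only the \emph{full} sum $\sum_{k=1}^n\lVert X_k\rVert_{2+\delta}^2$ by $C\sigma_n^2$, while the clock $v_n$ is calibrated to the partial-sum variances $\sigma_m^2$, not to the running sums of the $(2+\delta)$-norms. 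For a genuinely non-stationary sequence these two intrinsic times need not be comparable increment by increment --- a short interval in variance-time can carry a non-negligible fraction of the total $(2+\delta)$-norm mass --- and neither the first half of \eqref{eqn:merlevede_cond2} nor the negligibility condition (which controls only the single largest term) repairs this. You would have to work with an $n$-dependent modulus $F_n(t)=\sigma_n^{-2}\sum_{k\le v_n(t)}\lVert X_k\rVert_{2+\delta}^2$ and establish uniform equicontinuity, or replace the moment criterion by a maximal inequality over blocks, which is essentially the published route. A secondary issue: the Rosenthal inequality for $\alpha$-mixing sequences under $\sum_\tau \tau^{2/\delta}\alpha(\tau)<\infty$ generally carries an additional term $\sum_{k\in I}\E[\lvert X_k\rvert^{2+\delta}]$ that your stated form omits and that must also be shown to be of the right order on sub-blocks. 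The finite-dimensional part of your plan is standard and sound in outline, though the coupling cost and the block-Lindeberg verification still require the same quantitative control.
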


We look to apply this result to functions of Hawkes processes.
Define the sequence $\{X_k\}_{k \ge 1}$ as
\begin{equation*}
	X_k = \sum_{i=1}^d \int_{k-1}^k f_i(u) \bigl( N_i(\dd u) - m_i \dd u \bigr),
\end{equation*}
so that its partial sums $S_n$ coincide with the statistic defined in \eqref{eqn:partial_sums} when $T = n$ is integer.


First note that
\begin{align*}
	\sum_{k=1}^n \mathbb E[X_k^2] 
	&= \sum_{k=1}^n \sum_{i = 1}^d \sum_{j=1}^d \iint_{[k-1, k]^2} f_i(u)f_j(v) C_{ij}(\dd u \times \dd v)\\
	&\le \sum_{k=1}^n \sum_{i = 1}^d \sum_{j=1}^d \lVert f_i \rVert_\infty \lVert f_j \rVert_\infty \iint_{[0, 1]^2} C_{ij}(\dd u \times \dd v) \eqcolon n \kappa_1,
\end{align*}
where the inequality comes from the boundedness of $f_i$ and the stationarity of $\N$.
Then, 
\begin{equation*}
\sup_{n \ge 1}\, \frac{1}{\sigma_n^2} \sum_{k = 1}^n \mathbb E[X_k^2] \le \sup_{n \ge 1}\, \frac{n \kappa_1}{n\sigma^2 + o(n)} < \infty.
\end{equation*}
and the left part of condition \eqref{eqn:merlevede_cond2} is fulfilled.
Next, we have that
\begin{align*}
	\mathbb E[\lvert X_k \rvert^{2+\delta}] 
	& \le d^{1+\delta} \sum_{i=1}^d \mathbb E \left\lvert \int_{n-1}^n f_i(u) \bigl(N_i(\dd u) - m_i \dd u\bigr) \right\rvert^{2+\delta}\\
	& \le (2d)^{1+\delta} \sum_{i=1}^d \left( \mathbb E \left\lvert \int_{n-1}^n f_i(u) N_i(\dd u) \right\rvert^{2+\delta} + \mathbb E \left\lvert \int_{n-1}^n f_i(u) m_i \dd u \right\rvert^{2+\delta} \right)\\
	& \le (2d)^{1+\delta} \sum_{i=1}^d \lVert f_i \rVert_\infty^{2+\delta} \left( \mathbb E \left[ \bigl\lvert N_i((0,1]) \bigr\rvert^{2+\delta} \right] + m_i^{2+\delta} \right) \eqcolon \kappa_2,
\end{align*}
where the first two inequalities follow from Jensen's inequality and the third from the boundedness of $f_i$ and the stationarity of $N_i$.
Then, as $N_i$ admits finite exponential moments as a Poisson cluster of subcritical multitype Poisson branching processes, we find that $\kappa_2 < \infty$, and since $\sigma_n^2 = n \sigma^2 + o(n)$ as $n \to \infty$, the right part of condition \eqref{eqn:merlevede_cond2} is verified.


For the left hand side condition in \eqref{eqn:merlevede_cond3}, we write
\begin{equation*}
	\sup_{n \ge 1}\, \frac{1}{\sigma_n^2} \sum_{k = 1}^n \lVert X_k \rVert_{2+\delta}^2
	\le \sup_{n \ge 1}\,\frac{n \kappa_2^{2/(2+\delta)}}{n \sigma^2 + o(n)} < \infty,
\end{equation*}
and the right hand side is satisfied when $(\beta - 1)\delta > 2$ according to Theorem \ref{thm:strong_mixing}.

Finally, Proposition \ref{prop:fclt} follows by applying the Theorem \ref{thm:merlevede_fclt} to $\{X_k\}_{k \ge 1}$, and whenever $T \not\in \mathbb N$ using Slutsky's lemma for $S_T = S_{\lfloor T \rfloor} + R(T)$ with $R(T) = o_{\mathbb P}(\sigma_T)$.

\bibliography{references}

\begin{thebibliography}{25}
\providecommand{\natexlab}[1]{#1}
\providecommand{\url}[1]{\texttt{#1}}
\expandafter\ifx\csname urlstyle\endcsname\relax
  \providecommand{\doi}[1]{doi: #1}\else
  \providecommand{\doi}{doi: \begingroup \urlstyle{rm}\Url}\fi

\bibitem[Bacry et~al.(2020)Bacry, Bompaire, Ga{\"{i}}ffas, and Muzy]{Bacry2020}
E.~Bacry, M.~Bompaire, S.~Ga{\"{i}}ffas, and J.-F. Muzy.
\newblock {Sparse and low-rank multivariate Hawkes processes}.
\newblock \emph{Journal of Machine Learning Research}, 21\penalty0
  (50):\penalty0 1--32, 2020.

\bibitem[Bremaud and Massoulie(1996)]{Bremaud1996}
P.~Bremaud and L.~Massoulie.
\newblock Stability of nonlinear hawkes processes.
\newblock \emph{The Annals of Probability}, 24\penalty0 (3):\penalty0
  1563--1588, 1996.

\bibitem[Cheysson and Lang(2022)]{Cheysson2022}
F.~Cheysson and G.~Lang.
\newblock {Spectral estimation of Hawkes processes from count data}.
\newblock \emph{The Annals of Statistics}, 50\penalty0 (3):\penalty0
  1722--1746, 2022.

\bibitem[Chornoboy et~al.(1988)Chornoboy, Schramm, and Karr]{Chornoboy1988}
E.~S. Chornoboy, L.~P. Schramm, and A.~F. Karr.
\newblock {Maximum likelihood identification of neural point process systems}.
\newblock \emph{Biological Cybernetics}, 59\penalty0 (4-5):\penalty0 265--275,
  1988.

\bibitem[Costa et~al.(2020)Costa, Graham, Marsalle, and Tran]{Costa2020}
M.~Costa, C.~Graham, L.~Marsalle, and V.~C. Tran.
\newblock {Renewal in Hawkes processes with self-excitation and inhibition}.
\newblock \emph{Advances in Applied Probability}, 52\penalty0 (3):\penalty0
  879--915, sep 2020.

\bibitem[{Da Fonseca} and Zaatour(2014)]{DaFonseca2014}
J.~{Da Fonseca} and R.~Zaatour.
\newblock \emph{{Hawkes process: Fast calibration, application to trade
  clustering, and diffusive limit}}, volume~34.
\newblock 2014.

\bibitem[{Da Fonseca} and Zaatour(2015)]{DaFonseca2015}
J.~{Da Fonseca} and R.~Zaatour.
\newblock {Clustering and Mean Reversion in a Hawkes Microstructure Model}.
\newblock \emph{Journal of Futures Markets}, 35\penalty0 (9):\penalty0
  813--838, 2015.

\bibitem[Daley and Vere-Jones(2003)]{Daley2003}
D.~J. Daley and D.~Vere-Jones.
\newblock \emph{An introduction to the theory of point processes. {V}ol. {I}}.
\newblock Probability and its Applications (New York). Springer-Verlag, New
  York, second edition, 2003.
\newblock Elementary theory and methods.

\bibitem[Dion et~al.(2021)Dion, Lemler, and L{\"{o}}cherbach]{Dion2021}
C.~Dion, S.~Lemler, and E.~L{\"{o}}cherbach.
\newblock {Exponential ergodicity for diffusions with jumps driven by a Hawkes
  process}.
\newblock \emph{Theory of Probability and Mathematical Statistics},
  102:\penalty0 97--115, mar 2021.

\bibitem[Doukhan et~al.(1994)Doukhan, Massart, and Rio]{Doukhan1994}
P.~Doukhan, P.~Massart, and E.~Rio.
\newblock The functional central limit theorem for strongly mixing processes.
\newblock \emph{Annales de l'I.H.P. Probabilit\'es et statistiques},
  30\penalty0 (1):\penalty0 63--82, 1994.

\bibitem[Embrechts et~al.(2011)Embrechts, Liniger, and Lin]{Embrechts2011}
P.~Embrechts, T.~Liniger, and L.~Lin.
\newblock Multivariate hawkes processes: an application to financial data.
\newblock \emph{Journal of Applied Probability}, 48\penalty0 (A):\penalty0
  367–378, 2011.

\bibitem[Gao and Zhu(2018)]{Gao2018}
X.~Gao and L.~Zhu.
\newblock {Functional central limit theorems for stationary Hawkes processes
  and application to infinite-server queues}.
\newblock \emph{Queueing Syst.}, 90\penalty0 (1-2):\penalty0 161--206, 2018.

\bibitem[Graham(2021)]{Graham2021}
C.~Graham.
\newblock {Regenerative properties of the linear Hawkes process with unbounded
  memory}.
\newblock \emph{The Annals of Applied Probability}, 31\penalty0 (6), dec 2021.

\bibitem[Gupta et~al.(2017)Gupta, Farajtabar, Dilkina, and Zha]{gupta2017}
A.~Gupta, M.~Farajtabar, B.~Dilkina, and H.~Zha.
\newblock Hawkes processes for invasive species modeling and management, 2017.

\bibitem[Hansen et~al.(2015)Hansen, Reynaud-Bouret, and Rivoirard]{Hansen2015}
N.~R. Hansen, P.~Reynaud-Bouret, and V.~Rivoirard.
\newblock {Lasso and probabilistic inequalities for multivariate point
  processes}.
\newblock \emph{Bernoulli}, 21\penalty0 (1):\penalty0 83--143, 2015.

\bibitem[Hawkes(1971)]{Hawkes1971}
A.~G. Hawkes.
\newblock Point spectra of some mutually exciting point processes.
\newblock \emph{Journal of the Royal Statistical Society. Series B
  (Methodological)}, 33\penalty0 (3):\penalty0 438--443, 1971.

\bibitem[Hawkes and Oakes(1974)]{Hawkes1974}
A.~G. Hawkes and D.~Oakes.
\newblock {A cluster process representation of a self-exciting process}.
\newblock \emph{J. Appl. Probab.}, 11\penalty0 (03):\penalty0 493--503, sep
  1974.

\bibitem[Merlev{\`{e}}de and Peligrad(2020)]{Merlevede2020}
F.~Merlev{\`{e}}de and M.~Peligrad.
\newblock {Functional CLT for nonstationary strongly mixing processes}.
\newblock \emph{Statistics and Probability Letters}, 156\penalty0 (Umr
  8050):\penalty0 1--12, 2020.

\bibitem[Ogata(1988)]{Ogata1988}
Y.~Ogata.
\newblock {Statistical models for earthquake occurrences and residual analysis
  for point processes}.
\newblock \emph{Journal of the American Statistical Association}, 83\penalty0
  (401):\penalty0 9--27, 1988.

\bibitem[Poinas et~al.(2019)Poinas, Delyon, and Lavancier]{Poinas2019}
A.~Poinas, B.~Delyon, and F.~Lavancier.
\newblock {Mixing properties and central limit theorem for associated point
  processes}.
\newblock \emph{Bernoulli}, 25\penalty0 (3):\penalty0 1724 -- 1754, 2019.

\bibitem[Reynaud-Bouret and Roy(2006)]{Reynaud-Bouret2006}
P.~Reynaud-Bouret and E.~Roy.
\newblock {Some non asymptotic tail estimates for Hawkes processes}.
\newblock \emph{Bulletin of the Belgian Mathematical Society - Simon Stevin},
  13\penalty0 (5):\penalty0 883--896, 2006.

\bibitem[Rosenblatt(1956)]{Rosenblatt1956}
M.~Rosenblatt.
\newblock A central limit theorem and a strong mixing condition.
\newblock \emph{Proceedings of the National Academy of Sciences of the United
  States of America}, 42\penalty0 (1):\penalty0 43--47, 1956.

\bibitem[Westcott(1971)]{Westcott1971}
M.~Westcott.
\newblock {On Existence and Mixing Results for Cluster Point Processes}.
\newblock \emph{Journal of the Royal Statistical Society: Series B
  (Methodological)}, 33\penalty0 (2):\penalty0 290--300, 1971.

\bibitem[Westcott(1972)]{Westcott1972}
M.~Westcott.
\newblock {The probability generating functional}.
\newblock \emph{Journal of the Australian Mathematical Society}, 14\penalty0
  (4):\penalty0 448--466, 1972.

\bibitem[Zannettou et~al.(2018)Zannettou, Caulfield, Blackburn, Cristofaro,
  Sirivianos, Stringhini, and Suarez-Tangil]{Zannettou2018}
S.~Zannettou, T.~Caulfield, J.~Blackburn, E.~D. Cristofaro, M.~Sirivianos,
  G.~Stringhini, and G.~Suarez-Tangil.
\newblock On the origins of memes by means of fringe web communities, 2018.

\end{thebibliography}
\bibliographystyle{abbrvnatnourl}

\end{document}